\documentclass[11pt]{article}
\usepackage{amsmath,amsfonts,amssymb,amsthm,enumerate,graphicx}
\usepackage{subfigure}
\usepackage{amsmath}
\usepackage{amssymb}
\usepackage[german,english]{babel}
\usepackage{graphicx}
\usepackage{gastex}
\usepackage{longtable}
\usepackage{lscape}
\usepackage{multicol}
\usepackage{latexsym}
\usepackage{verbatim}
\usepackage{multirow}
\usepackage{authblk}

\usepackage{amssymb,latexsym,amsmath,epsfig,amsthm,mathrsfs}
\usepackage{amsfonts}
\usepackage{amscd}
\usepackage{dsfont}
\usepackage{bbm}
\usepackage{rotating}
\usepackage{newlfont}
\usepackage{enumitem}
\usepackage{lineno,hyperref}
\usepackage[english]{babel}
\usepackage{tabularx}
\usepackage{tikz}
\usepackage{tkz-graph}
\usepackage{longtable}
\usepackage{tabularx,ragged2e,booktabs,caption}
\tikzstyle{vertex}=[ draw, inner sep=0pt, minimum size=0pt]

\usepackage{epstopdf}
\usepackage{url}
\usepackage{hyperref}
\usepackage{float}
\usepackage{enumitem}
\usepackage{mathrsfs}
\usepackage[mathscr]{euscript}
\usepackage{mathtools}

\DeclarePairedDelimiter{\floor}{\lfloor}{\rfloor}
\newlist{subquestion}{enumerate}{1}
\date{}
\usepackage{soul}
\usepackage{xfrac}
\usepackage[normalem]{ulem}
\usepackage{braket}

\textheight9.2in \textwidth6in \hoffset-0.6in \voffset-0.6in

\newtheorem{theorem}{{\bf Theorem}}[section]

\newtheorem{definition}[theorem]{{\bf Definition}}

\numberwithin{Subcase}{Case}

\numberwithin{Subsubcase}{Subcase}

\newtheorem{lem}{{\sc Lemma}}[section]

\newtheorem{prop}{{\sc Proposition}}[section]

\newcommand{\Echar}[1]{Euler characteristic #1}

\newcommand{\vx}{vertex}


\begin{document}
	\author[1] {Debashis Bhowmik}
	\author[2] {Dipendu Maity}
	\author[1] {Bhanu Pratap Yadav}
	\author[1] {Ashish Kumar Upadhyay}
	\affil[1]{Department of Mathematics, Indian Institute of Technology Patna, Patna 801\,106, India.
		\{debashis.pma15, bhanu.pma16, upadhyay\}@iitp.ac.in.}
	\affil[2]{Department of Sciences and Mathematics,
		Indian Institute of Information Technology Guwahati, Bongora, Assam-781\,015, India.
		dipendu@iiitg.ac.in/dipendumaity@gmail.com.}
	
	\title{New Classes of Quantum Codes Associated with Surface Maps}
	
	
	\date{\today}

	\maketitle
	
	\vspace{-10mm}
		\begin{abstract}
		If the cyclic sequences of {face types} {at} all vertices in a map are the same, then the map is said to be a semi-equivelar map. In particular, a semi-equivelar map is equivelar if the faces are the same type. Homological quantum codes represent a subclass of topological quantum codes. In this article, we introduce {thirteen} new classes of quantum codes. These codes are associated with the following: (i) equivelar maps of type $ [k^k]$, (ii) equivelar maps on the double torus along with the covering of the maps, and (iii) semi-equivelar maps on the surface of \Echar{-1}, along with {their} covering maps. The encoding rate of the class of codes associated with the maps in (i) is such that $ \frac{k}{n}\rightarrow 1 $ as $ n\rightarrow\infty $, and for the remaining classes of codes, the encoding rate is $ \frac{k}{n}\rightarrow \alpha $ as $ n\rightarrow \infty $ with $ \alpha< 1 $.
	\end{abstract}
\noindent {\small {\em MSC 2010\,: } 94Bxx
	
	\noindent {\em Keywords:} Topological Quantum Code; Homological Quantum Code; Semi-equivelar maps.}

	\section{Introduction}
\label{introduction}

Quantum error{-}correction codes (QECs) have been developed to protect quantum information from decoherence and quantum noise. In 1995, Shor \cite{shor1995} {became the} first {to} introduce QECs{. Additionally,} {in 1998}, Calderbank et al. \cite{calderbank1998} proposed a systematic way to create QECs from classical error-correcting codes. {In 1997, Gottesman {\cite{Gottesman1997}} introduced \textit{stabilizer code}, a tool to describe quantum codes like linear codes in a classical setting.} Topological quantum computation is an analog type of quantum computation with the benefit of being inherently fault-tolerant because of the topological properties of the physical system. {The topological quantum code (TQC) constitutes a class of QECs that are constructed based on linear code structure. Such codes are related to tessellations of a surface.} {This topological quantum code was implemented by Kitaev \mbox{\cite{kitaev2003}} in 2003.}

In this article, a surface will mean a connected{,} compact 2-manifold without boundary. A cellular embedding of a simple finite graph on a surface is called a map. Semi-equivelar maps are generalizations of Archimedean solids to the surfaces other than the 2-sphere. If $G$ denotes an embedded graph on a surface {M}, then the vertices and edges of the graph $G$ are called vertices and edges of the map, and closures of the connected components of {$M \setminus G$} are called faces. Let $ \mathcal{K} $ {be} a map and $ V(\mathcal{K}) $ denote the set of vertices of $ \mathcal{K} $. For $u\in V(\mathcal{K}) $, the faces containing  $u $ form a cycle (called the {\em face-cycle} at  $u $)  $C_u $ in the dual $ \mathcal{K}^* $ of $\mathcal{K}$. By combining neighboring polygons with the same number of vertices, $C_u $ can be decomposed in the form $ B_1-B_2-\dots -B_k-A_{1,1} $, where $ B_i=A_{i,1}-\dots -A_{i,n_i} $ is a path consisting of $ a_i $-gons $ A_{i,1},\dots,A_{i,n_i} $ for $ 1\leq i\leq k $, $a_r\neq a_{r+1} $ for  $1\leq r\leq k-1 $ and  $a_k\neq a_1 $. In this case, we say that $ u $ is of type $[a_1^{n_1}, \dots, a_k^{n_k}] $. A map $ \mathcal{K} $ is said to be a \textit{semi-equivelar map} {(SEM)} if for any $ u, v \in V(\mathcal{K}) $, $ C_u $ and $ C_v $ are of the same type. We can say that the {SEM} is a map of type $ [a_1^{n_1}, \dots, a_l^{n_l}] $. In particular, if the type of a map is $ [p^q]$, then it is called an \textit{equivelar map}. We suggest \cite{dm2018} and \cite{bmu2020} to the reader for further information on this topic.

We define the $ d $-th cover map of $ \mathcal{K} $ as follows: let $ C $ be a non-separable cycle of length $ k $ in $ \mathcal{K} $. By cutting $ \mathcal{K} $ along $ C $, we then have a map $ \mathcal{K}_* $ with two boundary cycles $ A(p_1,p_2,\dots, p_k) $ and $ B(q_1,q_2,\dots, q_k) $, where identifying $ A $ with $ B $ by the map $ p_i\longmapsto q_i $ ($ i=1,2,\dots, k $) {produces} the cycle $ C $, and hence $ \mathcal{K} $. We consider two copies of $ \mathcal{K}_* $, {and} denote {them} as $ \mathcal{M}_1 $ and $ \mathcal{M}_2 $. Thus $ \mathcal{M}_1 $ {has} boundary cycles{, namely,} $ A_1(p_1^1,p_2^1,\dots, p_k^1) $, $ B_1(q_1^1,q_2^1,\dots, q_k^1) ${,} and {$\mathcal{M}_2$ has boundary cycles}{, namely,} $ A_2(p_1^2,p_2^2,\dots, p_k^2) $, $ B_2(q_1^2,q_2^2,\dots, q_k^2) $. By identifying $ A_1(p_1^1,p_2^1,\dots, p_k^1) $ with $ B_2(q_1^2,q_2^2,\dots, q_k^2) $ by the map $ p_i^1\longmapsto q_i^2 $, and $ B_1(q_1^1,q_2^1,\dots, q_k^1) $ with $ A_2(p_1^2,p_2^2,\dots, p_k^2) $ by the map $ q_i^1\longmapsto p_i^2 $, we obtain a map on {the surface of \Echar{} $ 2\chi(\mathcal{K}) $ with $ 2 \times |V(\mathcal{K})|$ number of vertices{, where $\chi(\mathcal{K})$ is the Euler characteristic of $\mathcal{K}$, and $|V(\mathcal{K})|$ denotes the cardinality of the set $|V(\mathcal{K})|$}.} Again, we consider $ d $ copies of $ \mathcal{K}_* $ as $ \mathcal{M}_j $ with cycles $ A_j,B_j $ for $ j=1,2,\dots d $ ($ d\geq1 $). As before, identifying $ B_i $ with $ A_{i+1} $ for $1 \le i \le d-1${,} and $ B_d $ with $ A_1 $, we obtain a map of the same type{.} {This map is called} the {\em $ d $-th cover} of $ \mathcal{K} $. We denote this map as $ \mathcal{K}^d $. Clearly, $d \times |V(\mathcal{K})| $ is the number of vertices of $ \mathcal{K}^d $ with \Echar{} $ d\chi(\mathcal{K})$. 	

{We denote the Galois field $ GF(q) $ by $ \mathbb{F}_q $, where $ q $ is a prime integer. Hence, $ \mathbb{F}_q^n $ is a vector space of dimension $ n $, where $ n $ is a positive integer. Letting $ x,y\in\mathbb{F}_q^n $ and $ x=(x_1,x_2,\dots, x_n) $, $ y=(y_1,y_2,\dots, y_n) $. Then, the product of $ x $ and $ y $ is defined as}
\begin{equation}
{x\cdot y=\sum_{i=1}^{n}x_iy_i} \nonumber
\end{equation}

{A \textit{linear code} \mbox{\cite{hill}} over $\mathbb{F}_q$ is a subspace of $\mathbb{F}_q^n$. That is, a subset $ \mathcal{C} $ of $ \mathbb{F}_q^n $ is said to be a linear code if and only if}
\begin{enumerate}
	\item {$ x+y\in \mathcal{C} $ for all $ x,y\in \mathcal{C} $,}
	\item {$ ax\in \mathcal{C} $ for all $ x\in \mathcal{C} $ and $ a\in \mathbb{F}_q $.}
\end{enumerate}
{If $ q=2 $, then the code $ \mathcal{C} $ is called \textit{binary linear code}. The elements of $ \mathcal{C} $ are called \textit{codewords}. If $ \mathcal{C} $ is a $ k $-dimensional ($ <n $) subspace, then $ \mathcal{C} $ is called a $ [[n,k]] $ code. The \textit{weight} of a codeword $ x\in\mathcal{C} $ is the number of non-zero elements, and is denoted as $ w(x) $. The distance between two codewords $ x,y\in \mathcal{C} $ (also called the Hamming distance) is the number of positions $ x $ and $ y $ which differ. It is denoted by $ d_H(x,y) $. Using this distance, the code distance is defined. The \textit{distance} of a code $ \mathcal{C} $ is denoted as $ d_{min} $ (or $ d(\mathcal{C}) $), and defined by}
\begin{equation}
{d_{min}=min\{d_H(x,y):x,y\in \mathcal{C}, x\neq y\}} \nonumber
\end{equation}
{In this case, code $\mathcal{C}$ is referred to as $ [[n,k,d_{min}]] $ code. A matrix $ G\in M_{n\times k}(\mathbb{F}_q) $ exists, where $ G $ is said to be the \textit{generator matrix} of $ \mathcal{C} $ if $ \mathcal{C}=G(\mathbb{F}_q^k) $, where $M_{n\times k}(\mathbb{F}_q)$ denotes the set of $n\times k$ matrices over $\mathbb{F}_q$. A matrix $ H\in M_{n\times k}(\mathbb{F}_q) $ is called the \textit{parity-check matrix} of the code $ \mathcal{C} $ if $ \mathcal{C}=ker(H) $.}

{Since $ \sfrac{\mathbb{F}_q^n}{ker(H)}\cong Im(H) $, then $~ dim(Im(H))~=$ $n-k $, i.e., $ H $ does not need to be full rank. Moreover, $ HG=0 $, as columns of $ G $ are elements of $ \mathcal{C} $. Therefore, the generating matrix $ G $ for a code $ \mathcal{C} $ can be constructed by considering a basis of $ \mathcal{C} $, and placing it as column of $ G $. To make a full rank parity-check matrix of $ \mathcal{C} $, we take into account the orthogonal complement}
\begin{equation}
{\mathcal{C}^{\perp}=\{y\in \mathbb{F}_q^n: y\cdot x=0 \text{ for all }x\in \mathcal{C}\}. \nonumber}
\end{equation}
{$\mathcal{C}^{\perp}$ is called the \textit{dual code} of $ \mathcal{C} $. It is not necessary that $ \mathcal{C}\cap\mathcal{C}^\perp=\{0\} $. It is easy to determine that $ \mathcal{C}^\perp $ is a $ [[n,n-k]] $ code with $ H^T $ as generator matrix, and $ G^T $ as parity check matrix.}

{\textit{Bit} \mbox{\cite{MichaelA.Nielsen2004}} is the fundamental concept of classification computation.
	Quantum computation is based on \textit{qubits} (or \textit{quantum bits}), and is denoted by the symbol `$ \ket{} $'. Because a bit has two states, namely 0 or 1, a qubit also has these states. Two probable states for each qubit are $ \ket{0} $ and $ \ket{1} $. This can also result from a linear combination of states, such as }
\begin{equation}
{\ket{\sigma}=\alpha\ket{0}+\beta\ket{1} \nonumber}
\end{equation}
{for some $ \alpha,\beta\in\mathbb{C} $. States of the qubits can also be considered as vectors of $ \mathbb{C}^2 $ with basis (orthogonal) $ \ket{0} $ and $ \ket{1} $. The state of $ n $ qubits is an element of $ 2^n $-dimensional vector space $ \mathcal{H}_n=(\mathbb{C}^2)^{\otimes n} $. Thus, for $ \sigma \in \mathcal{H}_n $,}
\begin{equation}
{\ket{\sigma}=\sum_{j\in \mathbb{F}_2^n}a_j\ket{j} \nonumber}
\end{equation}
{where $ \ket{j}=\ket{j_1}\otimes\ket{j_2}\otimes\dots\otimes\ket{j_n} $, and $ \sum|a_i^2|=1 $. In particular, $ \ket{\sigma_{1}} $ at time $ t_1 $ and $ \ket{\sigma_{2}} $ at time $ t_2 $ are associated by the Unitary transformation $ U=U(t_1,t_2)\in U(\mathcal{H}_n) $ such that $ \ket{\sigma_{1}}=U\ket{\sigma_{2}} $, where $ U(\mathcal{H}_n) $ is the group of unitary operators on the space of $ n $ qubits.}

{The Pauli group on $ n $ qubits is the set $P_n=\{c\bigotimes_{i=1}^{n}A_i: A_i\in$ $\{I,X,Y,Z \}, c\in \{\pm 1, \pm i\}\}$, where $ I,X,Y,X $ represents the Pauli matrix and is given as follows:}
\begin{equation*}
X =
	\begin{pmatrix}
	0 & 1 \\
	1 & 0 \\
	\end{pmatrix},
	\hspace{.2cm}
	Y=
	\begin{pmatrix}
	0 & -i \\
	i & 0
	\end{pmatrix},
	\hspace{.2cm}
Z=
	\begin{pmatrix}
	1 & 0 \\
	0 & -1
	\end{pmatrix},
	\hspace{.2cm}
	I=
	\begin{pmatrix}
	1 & 0 \\
	0 & 1
	\end{pmatrix}.
\end{equation*}
{The stabilizer group $ \mathcal{S} $ is the subgroup of $ P_n $ with $ -I\notin \mathcal{S} $. A stabilizer code $ \mathcal{C} $ is thus the simultaneous eigenspace associated with $ \mathcal{S} $, i.e., $ \mathcal{C}=\{\ket{\psi}:P\ket{\psi}=\ket{\psi} \forall P\in \mathcal{S}\} $. Moreover, the \textit{normalizer} of $ \mathcal{S} $ is $N_{P_n}(\mathcal{S})=\{E\in P_n:$ $EgE^\dagger$ $\in \mathcal{S}$ for all $g\in\mathcal{S}\}$, and the \textit{centralizer} of $ \mathcal{S} $ is $C_{P_n}(\mathcal{S})=\{E\in P_n: EgE^\dagger=g\text{ for all } g\in\mathcal{S}\}$, where $E^\dagger$ is the Hermitian adjoint of the operator $ E $.}

{The distance $ d_{min} $ of the stabilizer code $ \mathcal{C} $ is the minimum weight of Pauli operators in $ C_{P_n}(\mathcal{S})\setminus \mathcal{S} $. It can correct errors up to $ \lfloor \frac{d_{min}-1}{2} \rfloor $ qubits (see {\cite{leslie2014}}), where $ \floor{*} $ is the usual floor function. If $ \mathcal{C} $ is a $ 2^k $ dimensional subspace of $ \mathcal{H}_n $ with distance $ d_{min} $, then the code $ \mathcal{C} $ is referred to as the $ [[n,k,d_{min}]] $ quantum code. Here, if the encoding rate $\frac{k}{n}$ of the $[[n,k,d_{min}]]$ quantum code $\mathcal{C}$ tends to $1$, then $\mathcal{C}$ is considered as a good code. In particular, parameters of the Kitaev's toric code are $[[2m^2,2,m]]$: therefore, its encoding rate is $0$.}

\textit{Homological Quantum Code} {(HQC)} is a subclass of TQCs. It was introduced by Bombin and Martin-Delgado \cite{bombin2006}. They also presented some {HQC} on the surface of an arbitrary genus. This article is an attempt to present several new classes of {HQC} associated with equivelar {map} and {SEM}. These codes are constructed by two parity-check matrices $ H_X $ and $ H_Z $ such that $ H_XH_Z^T=0 $, where entries of $ H_X $, $ H_Z $ are from $ \mathbb{Z}_2 $.

In this paper, we build {thirteen} new classes of {HQCs}.
{The quantum codes associated with equivelar maps are} as follows:
\begin{enumerate}
	\item $ [[(2m_1-1)(3^{m_1-1}+2m_2-1),2+(2m_1-5)(3^{m_1-1}+2m_2-1), 4 ]],  m_1\geq3, m_2\geq0, $
	\item  $ [[m_1(3^{m_1}+2m_2-1), 2+(m_1-2)(3^{m_1}+2m_2-1), 4]], m_1\geq3, m_2\geq0; $
\end{enumerate}
the encoding rates of these two codes are such that $ \frac{k}{n}\rightarrow 1 $ as $ n\rightarrow \infty $. We also present{ed} {eleven} more classes of quantum codes{,} {namely}, $ [[42d,2(1+d),3]] $, $ [[40d,2+d,4]] $, {$[[84d,2+d,4]]$, $[[63d,2+d,4]]$, $[[126d,2+d,4]]$, $[[60d,2+d,4]]$,} $ [[72d,2+d,4]] $, $ [[48d,2+d,4]] $, $ [[36d,2+d,3]] $, $ [[30d,2+d,4]] $, and $ [[36d,2+d,3]] $ with $ d\geq 1 $. In this list, the first one is associated with the $ d $-th cover map of the equivelar maps on a double torus, and the remaining codes are associated with the $ d $-th cover map of the {SEMs} on the surface with Euler characteristic $-1$. The encoding rates of these codes are less than 1.

This paper is organized in the following manner. In Section \ref{tqc}, we provide an outline of TQCs, and in Section \ref{homology quantum code}, we present an overview of {HQC} for a general complex and surface maps. In Section \ref{kk equivelar code}, we present the {class of HQCs which are derived from the} equivelar maps of type $ [k^k]$. In Section \ref{37 quantum code}, we provide an example to calculate codes associated with equivelar maps and {SEMs}. We produce a list of codes associated with equivelar maps on a double torus and {SEMs} on the surface of \Echar{-1} and the covering maps. Finally, in Section \ref{table of quantum codes}, we compare our codes with existing quantum codes which are available in the literature, and we conclude this paper in Section \ref{conclution}.
\section{Topological Quantum code}\label{tqc}

%
%

{A regular tessellation of a Euclidean or hyperbolic plane is a partition by regular polygons, all with the same number of edges, for which the intersection of two polygons is either empty, a vertex, or an edge. We denote a regular tessellation by $[p^q]$, where $q$ number of regular polygons with $p$ edges meet at each vertex, and the dual map by $[q^ p]$. More details can be found in \mbox{\cite{Gruenbaum1987}}.}

{TQC is a subclass of stabilizer codes.} In general, TQC is defined as follows.
\begin{definition}[\cite{silva2010}]
	{ Let $ M $ be a map (or $[p^q]$  tessellation). Let $ V $, $ E $ and $ F $ denote the vertex set, edge set and face set, respectively, of $M$. Given a vertex $ v\in V $ and a face $ f\in F $, we define an operator $ A_v $ as the tensor product of $ X $ corresponding to the edges incident to $ v $, and $ B_f $ as the tensor product of $ Z $ corresponding to the edges on the border of $ f $. Hence, the topological quantum code $[[n, k, d_{min}]]$ is defined where the code length $ n = |E| $, the stabilizer $ \mathcal{S}=\{A_v|v\in V\}\cup\{B_f|f\in F\} $, the number of encoded qubits $ k = 2-\chi(M) $, and the code distance $ d_{min} = min\{\delta, \delta^*\} $, where $ \delta $ denotes the code distance in $M$,
		and $ \delta^* $ denotes the code distance in the dual map $M^*$.}
\end{definition}
Therefore, for a tessellation $ [p^q] $ of a closed, compact surface $ M $, \[ A_v=\bigotimes_{e\in E_v}X^e,~~~~~B_f=\bigotimes_{e\in E_f} Z^e \]
where $ E_v $ denotes the set of edges incident to the vertex $ v $, and $ E_f $ denotes the set of edges adjacent to the faces $ f $. {Thus, $ \mathcal{C}=\{\ket{\psi}: A_v\ket{\psi}=\ket{\psi}\forall v\in V\}\cup \{ \ket{\psi}: B_f\ket{\psi}=\ket{\psi}\forall f $ $\in F \} $.
	The minimum distance of this code, denoted by $ d_{min} $, is the shortest non-contractible cycle length, i.e., the number of edges of the shortest non-contractible cycle in the tessellations $ [p^q] $ and $ [q^p] $.} 
\section{Homological quantum codes associated with maps} \label{homology quantum code}
\begin{prop}[\cite{avaz2018,silva2009}]\label{quantum code define condition}
	Let $ \mathcal{C}_X $ and $ \mathcal{C}_Z $ be two classical binary linear codes of length $ n $ with parity-check matrices $ H_X $ and $ H_Z $, respectively. If $ H_XH_Z^T=0 $, then the stabilizer code with binary check matrix
	\begin{eqnarray}
	A=\begin{bmatrix}
	H_X & 0 \\
	0 & H_Z
	\end{bmatrix} \nonumber
	\end{eqnarray}
	is a $ [[n,k,d_{min}]] $ quantum code{,} where $ n $ is the codeword length, $ k=n-\dim\mathcal{C}_X^\perp-\dim{\mathcal{C}_Z^\perp} $, $ d_{min}= min\{wt(x):x\in (\mathcal{C}_Z\setminus \mathcal{C}_X^\perp)\cup (\mathcal{C}_X\setminus \mathcal{C}_Z^\perp) \} $.
\end{prop}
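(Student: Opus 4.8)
The plan is to work in the binary symplectic representation of the Pauli group, which turns commutation of Pauli operators into a bilinear condition over $\mathbb{F}_2$ and lets me read off both the dimension and the distance directly from the two classical codes. To each Pauli operator $\pm X^a Z^b$ (with $a,b \in \mathbb{F}_2^n$, where $X^a = X^{a_1} \otimes \cdots \otimes X^{a_n}$ and similarly for $Z^b$) I associate the vector $(a \mid b) \in \mathbb{F}_2^{2n}$. The elementary fact driving everything is that $X^a Z^b$ and $X^{a'} Z^{b'}$ commute if and only if their symplectic product $a \cdot b' + b \cdot a'$ vanishes in $\mathbb{F}_2$. Under this dictionary the rows of $[\,H_X \mid 0\,]$ are the $X$-type generators $X^r$, with $r$ ranging over the rows of $H_X$ (hence over a spanning set of $\mathcal{C}_X^\perp$), and the rows of $[\,0 \mid H_Z\,]$ are the $Z$-type generators $Z^s$, with $s$ ranging over the rows of $H_Z$ (a spanning set of $\mathcal{C}_Z^\perp$).

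First I would check that $\mathcal{S} := \langle X^r, Z^s\rangle$ is a legitimate stabilizer group. Two $X$-type (resp.\ two $Z$-type) generators commute automatically; an $X$-type and a $Z$-type generator commute precisely when $r \cdot s = 0$, and the hypothesis $H_X H_Z^T = 0$ says exactly that every row of $H_X$ is orthogonal to every row of $H_Z$, so $\mathcal{S}$ is abelian. Since all generators are pure $X$- or pure $Z$-strings with phase $+1$ and pairwise commute, every element of $\mathcal{S}$ can be written as $X^a Z^b$ with $a \in \mathcal{C}_X^\perp$, $b \in \mathcal{C}_Z^\perp$ and phase $+1$; such an operator equals $\pm I$ only when $a=b=0$, so $-I \notin \mathcal{S}$. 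For the count of encoded qubits I would use that the number of independent generators is $\operatorname{rank} H_X + \operatorname{rank} H_Z$ — the block structure forces any $\mathbb{F}_2$-linear dependence among the rows of $A$ to split into a dependence among the $H_X$-rows and one among the $H_Z$-rows — together with $\operatorname{rank} H_X = \dim \mathcal{C}_X^\perp$ and $\operatorname{rank} H_Z = \dim \mathcal{C}_Z^\perp$. The standard stabilizer identity $k = n - (\text{number of independent generators})$ then yields $k = n - \dim \mathcal{C}_X^\perp - \dim \mathcal{C}_Z^\perp$.

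The distance is where the real content lies, and it is the step I expect to be the main obstacle. I would first describe the centralizer: a Pauli $X^v Z^w$ commutes with every $Z^s$ iff $H_Z v = 0$, i.e.\ $v \in \mathcal{C}_Z$, and with every $X^r$ iff $H_X w = 0$, i.e.\ $w \in \mathcal{C}_X$; thus $C_{P_n}(\mathcal{S})$ consists of the $X^v Z^w$ with $v \in \mathcal{C}_Z$ and $w \in \mathcal{C}_X$, while $\mathcal{S}$ is cut out by the stronger conditions $v \in \mathcal{C}_X^\perp$ and $w \in \mathcal{C}_Z^\perp$. Hence a nontrivial logical operator $X^v Z^w \in C_{P_n}(\mathcal{S}) \setminus \mathcal{S}$ must fail at least one of these, so either $v \in \mathcal{C}_Z \setminus \mathcal{C}_X^\perp$ or $w \in \mathcal{C}_X \setminus \mathcal{C}_Z^\perp$. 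Because the weight of $X^v Z^w$ equals $|\operatorname{supp}(v) \cup \operatorname{supp}(w)|$, it is bounded below by both $wt(v)$ and $wt(w)$; consequently the weight of any nontrivial logical is at least the weight of a pure $X$-type logical (from some $v \in \mathcal{C}_Z \setminus \mathcal{C}_X^\perp$) or of a pure $Z$-type logical (from some $w \in \mathcal{C}_X \setminus \mathcal{C}_Z^\perp$). Since these pure-type operators are themselves nontrivial logicals, the minimum is attained among them, giving
\[ d_{\min} = \min\{ wt(x) : x \in (\mathcal{C}_Z \setminus \mathcal{C}_X^\perp) \cup (\mathcal{C}_X \setminus \mathcal{C}_Z^\perp) \}. \]
The delicate points to get right are the direction of the two orthogonality conditions (centralizer versus stabilizer membership) and the reduction of a mixed logical operator to a pure-type one of no greater weight, which is precisely what makes the pure-type minimum equal to the genuine code distance rather than merely an upper bound for it.
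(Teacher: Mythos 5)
Your proposal is correct and complete. Note, however, that the paper does not actually prove this proposition: it is stated with a citation and imported wholesale from \cite{avaz2018,silva2009} (the text that follows it in Section \ref{homology quantum code} merely \emph{applies} it to chain complexes by setting $H_X=[\partial_i]$ and $H_Z=[\partial_{i+1}]^T$; it does not justify the proposition itself). So there is no in-paper argument to compare against, and what you have written is the standard symplectic proof of the CSS construction that one would find in those references. Your treatment of the three ingredients is sound: commutativity of the generators is exactly $H_XH_Z^T=0$; the block structure of $A$ does force linear dependences to split, giving $\operatorname{rank}H_X+\operatorname{rank}H_Z=\dim\mathcal{C}_X^\perp+\dim\mathcal{C}_Z^\perp$ independent generators and hence the stated $k$; and the reduction of a mixed logical $X^vZ^w$ to a pure-type logical of no greater weight (via $|\operatorname{supp}(v)\cup\operatorname{supp}(w)|\ge\max\{wt(v),wt(w)\}$) is precisely the point that turns the pure-type minimum from an upper bound into an equality, which you correctly flag as the crux. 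The only cosmetic caveat is the usual one about phases: taken literally, $C_{P_n}(\mathcal{S})\setminus\mathcal{S}$ contains elements such as $-g$ for $g\in\mathcal{S}$, so the distance must be read modulo the phase subgroup (equivalently, over the quotient $P_n/\{\pm1,\pm i\}$); since weight is phase-independent this does not affect your argument, and the paper's own definition carries the same implicit convention.
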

Let $ (C_{\bullet},\partial_{\bullet}) $ be a chain complex associated with a complex $ K $
\begin{eqnarray}\label{chain complex}
\cdots\xrightarrow{} C_{i+1}\xrightarrow{\partial_{i+1}} C_i\xrightarrow{\partial_{i}} C_{i-1}\xrightarrow{}\cdots.
\end{eqnarray}
Then, $ H_i(K)=\frac{Ker(\partial_i)}{Im(\partial_{i+1})} $ is called the $ i $-th Homology of $K$, as in \cite{hatcher}. In particular, if the chain complex is over $ \mathbb{Z}_2 $, then $ H_i(K) =  H_i(K,\mathbb{Z}_2) ${,} and if it is over $ \mathbb{Z} $, then $ H_i(K) = H_i(K,\mathbb{Z}) $.

Let $ H_X=[\partial_i]_{p\times q} $ and $ H_Z=[\partial_{i+1}]^T_{r\times q} $, where $ p=\dim C_{i-1} $, $ q= \dim C_i$, $ r=\dim C_{i+1} $ and $ [\partial_j] $ is a matrix corresponding to linear map $ [\partial_j] $ with respect to a basis of $ C_j $ and $ C_{j-1} $. Clearly, $ H_XH_Z^T=[\partial_i\circ \partial_{i+1}]=0  $. Thus, by Proposition \ref{quantum code define condition}, {we can construct a $ [[n,k,d_{min}]] $ quantum code}, where $ n=\dim C_i $, $ k=n-\dim\mathcal{C}_X^\perp-\dim{\mathcal{C}_Z^\perp}=\dim (H_i) $, and $ d_{min}= min\{wt(x):x\in(\mathcal{C}_X\setminus \mathcal{C}_Z^\perp)\cup (\mathcal{C}_Z\setminus \mathcal{C}_X^\perp) \} $. Note that $ H_i=\frac{Ker(\partial_i)}{Im(\partial_{i+1})}=\frac{Ker(H_X)}{Im(H_Z^T)}= \mathcal{C}_X\setminus \mathcal{C}_Z^\perp $, and by dualizing (\ref{chain complex}), we can obtain the $ i $-th Cohomology $ H^i=\frac{Ker(H_Z)}{Im(H_X^T)}=\mathcal{C}_Z\setminus \mathcal{C}_X^\perp $. For more information on this, see \cite{avaz2018, silva2009, tillich2009}.

Let $ M $ be a surface of genus $ g $, and $ \mathcal{K} $ be a map embedded on $ M $ with $ f_0 $ {($=V(\mathcal{K})$)}, $ f_1 $ and $ f_2 $ as vertex, edge and face set, respectively. Let $ \mathscr{V} $, $ \mathscr{E} $, and $ \mathscr{F} $ be $ \mathbb{Z}_2 $-vector spaces with basis $ f_0 $, $ f_1 $ and $ f_2 $, respectively. We can now define a chain complex
\begin{eqnarray}
\{0\} \rightarrow \mathscr{F}\xrightarrow{\partial_2}\mathscr{E}\xrightarrow{\partial_1}\mathscr{V}\rightarrow \{0\} \nonumber
\end{eqnarray}
with $ \partial_1\circ\partial_2=0 $. Hence, $ H_XH_Z^T=[\partial_1\circ\partial_2]=0 $, where $ H_X=[\partial_1] $, i.e., the vertex-edge incident matrix of $ \mathcal{K} $, and $ H_Z=[\partial_2]^T $, i.e., the face-edge incident matrix of $ \mathcal{K} $. Thus, the $ [[n,k,d_{min}]] $ {quantum code is obtained} with $ n=|f_1| $, $ k=\dim (H_1(\mathcal{K},\mathbb{Z}_2)) $, and $ d_{min}= min\{wt(x):x\in (\mathcal{C}_X\setminus \mathcal{C}_Z^\perp)\cup (\mathcal{C}_Z\setminus \mathcal{C}_X^\perp) \} $, i.e., the length of the shortest non-contractible cycle of $ \mathcal{K} $ or $ \mathcal{K}^* $, where $ \mathcal{K}^* $ is the dual map of $ \mathcal{K} $. Now, if $ H_2(\mathcal{K},\mathbb{Z}) \cong \mathbb{Z} $, then $ H_1(\mathcal{K}, \mathbb{Z}_2)\cong H_1(M,\mathbb{Z}_2)\cong(\mathbb{Z}_2)^{2g} $, and if $ H_2(\mathcal{K},\mathbb{Z})\cong \{0\} $, then $ H_1(\mathcal{K}, \mathbb{Z}_2)\cong H_1(M,\mathbb{Z}_2) \cong (\mathbb{Z}_2)^{g} $. Therefore, for $ H_2(\mathcal{K},\mathbb{Z}) \cong \mathbb{Z} $, $ k=2g $, and for $ H_2(\mathcal{K},\mathbb{Z})\cong \{0\} $, $ k=g $. However, $ H_2(\mathcal{K},\mathbb{Z}) \cong\mathbb{Z} $ implies that $ 2g = 2-\chi(\mathcal{K}) $, and $ H_2(\mathcal{K},\mathbb{Z}) \cong \{0\} $ implies that $ g=2-\chi(\mathcal{K}) $. Therefore, for any $ H_2(\mathcal{K},\mathbb{Z}) $, $ k=2-\chi(\mathcal{K}) $.

\section{Codes associated with equivelar maps of type {$ \lowercase{[k^k]} $}} \label{kk equivelar code}
We know the existence of equivelar maps of type $ [k^k] $ {from \mbox{\cite{dutta05}}}. In this section, we construct a class of homological quantum codes $ [[n,k,d_{min}]] $ associated with equivelar maps of type $ [k^k] $.
\begin{prop}[\cite{dutta05}]\label{exist_EM}
	For each $ m_1\geq 3 $ and $ m_2\geq0 $, there exists an $ 2 \times (3^{m_1-1}+2m_2-1) $-\vx{} self-dual equivelar map of type $ [{(2m_1-1)}^{2m_1-1}] $ and an $ (3^{m_1}+2m_2-1) $-\vx{} self-dual equivelar map of type $ [{(2m_1)}^{2m_1}] $.
\end{prop}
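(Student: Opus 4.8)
The plan is to produce each map by an explicit combinatorial construction (a finite map specified by its vertex--face incidences, i.e.\ a rotation system) and then to verify the three required properties directly: that the map is equivelar of the stated type, that it is self-dual, and that its vertex count is exactly as claimed. Before constructing anything, I record the constraints imposed by type $[k^k]$: every face is a $k$-gon and every vertex has degree $k$, so $2|f_1| = k|f_0| = k|f_2|$, which forces $|f_0| = |f_2|$ and $\chi(\mathcal{K}) = |f_0|\,(4-k)/2$. The equality $|f_0| = |f_2|$ is exactly the necessary numerical condition for self-duality, and the target surface (the torus when $k=6$, genus increasing with the vertex count when $k\ge 7$) is then pinned down by the vertex count alone; this lets me concentrate entirely on the combinatorics.

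First I would build, for each fixed $k$ (that is, each $m_1$), a \emph{base} self-dual map of type $[k^k]$ on $3^{m_1}-1$ vertices for even $k$ and on $2(3^{m_1-1}-1)$ vertices for odd $k$. Conceptually the cleanest source is the regular tessellation $\{k,k\}$ of the hyperbolic plane (of the Euclidean plane when $k=6$): one quotients it by a subgroup $\Gamma$ of its orientation-preserving symmetry group chosen so that the quotient has the desired number of vertices. To force self-duality I would require $\Gamma$ to be normalised by the polarity --- the order-$2$ element of the extended symmetry group that interchanges vertex-flags and face-flags --- so that this polarity descends to the quotient and realises an isomorphism $\mathcal{K}\to\mathcal{K}^*$. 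In practice, following \cite{dutta05}, it is easier to present the base map by explicit adjacency rules on a vertex set carrying the $3^{m_1}$ recursive structure and to read off both the bounding $k$-gons and the duality map directly from those rules.

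Next I would introduce the parameter $m_2$ through a local \emph{enlargement} surgery that splices in one ring of new cells while keeping every vertex of degree $k$ and every face a $k$-gon. The surgery must be carried out in a dual-symmetric way --- simultaneously on a vertex and on the corresponding face --- so that the polarity of the base map extends to the enlarged map; the new cells then arrive in polarity-paired orbits, which is why for even $k$ each step adds two vertices and for odd $k$ it adds four (and why the odd count carries the overall factor of $2$). An induction on $m_2$ shows that equivelarity of type $[k^k]$ and self-duality persist at every step, while the bookkeeping $\chi \mapsto \chi - (k-4)\cdot(\text{added vertices})/2$ confirms the surface; combining the base count with the per-step increment yields exactly $3^{m_1}+2m_2-1$ and $2(3^{m_1-1}+2m_2-1)$.

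The main obstacle is self-duality, which is a rigid global symmetry rather than a local condition: enlarging an equivelar map while staying equivelar is easy, but keeping a genuine isomorphism $\mathcal{K}\to\mathcal{K}^*$ alive requires every construction step to commute with the polarity. I would therefore carry an explicit duality map along with the construction throughout --- fixing it on the base map and checking cell by cell that the enlargement intertwines with it --- rather than attempting to recover self-duality after the fact. The remaining checks (correct vertex degrees and face sizes, connectedness, and the precise vertex counts) are then routine bookkeeping on the incidence data.
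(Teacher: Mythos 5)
Your proposal is a plan rather than a proof: both of its load-bearing steps are left unconstructed, and the second one is doubtful as stated. The paper takes this proposition directly from \cite{dutta05}, and the construction it relies on (reproduced in the proof of Lemma \ref{dmin for 2m1-1}) is a single explicit circulant presentation: the vertex set is $\mathbb{Z}_N$ with $N = 2\times(3^{m_1-1}+2m_2-1)$ or $N = 3^{m_1}+2m_2-1$, and the faces are \emph{all} cyclic translates $F_j = F_0 + j$ of one base face $F_0=(a_1,\dots,a_{k-2},a_{k-1}+m_2,a_k+2m_2)$ with $a_{2n-1}=3^{n-1}-1$, $a_{2n}=2\cdot 3^{n-1}-1$. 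Here $m_2$ enters as a shift inside the single defining face, not as an inductive step; equivelarity and the vertex count are read off at once, and self-duality comes from the tautological incidence-preserving bijection $j\mapsto F_j$ between vertices and faces. Nothing is built up from a base case.

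By contrast, your argument needs (i) a quotient of the hyperbolic $\{k,k\}$ tessellation with exactly $3^{m_1}-1$ (resp.\ $2(3^{m_1-1}-1)$) vertices by a subgroup normalised by the polarity, and (ii) a ``local enlargement surgery'' adding $2$ (resp.\ $4$) vertices that preserves both equivelarity and self-duality. Neither is exhibited. For (i), producing a torsion-free, polarity-normalised subgroup of the prescribed index whose quotient is a simple polyhedral map is essentially the existence statement itself, so deferring it is circular. For (ii), the gap is more serious than missing detail: each such step changes the Euler characteristic by $4-k<0$, so the modification cannot be supported inside a disc; any genuine move must re-glue the surface globally, and ``splicing in one ring of new cells'' around a vertex--face pair will generically break the degree and face-size conditions along the boundary of the modified region. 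You assert that induction on $m_2$ preserves type $[k^k]$ and self-duality, but without a concrete move there is nothing to induct on. To repair the proof you would either have to write down the surgery explicitly and verify it, or abandon the induction and give the uniform circulant construction as in \cite{dutta05}.
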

We establish a result in Lemma \ref{dmin for 2m1-1} to produce quantum codes associated with equivelar maps of types $ [{(2m_1-1)}^{2m_1-1}] $ and $ [{(2m_1)}^{2m_1}] $.
\begin{lem}\label{dmin for 2m1-1}
	If equivelar maps are of type $ [{(2m_1-1)}^{2m_1-1}] $ or $ [{(2m_1)}^{2m_1}] $, then $ d_{min}=4$.
\end{lem}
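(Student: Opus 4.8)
The plan is to recast $d_{min}$ as a girth computation. By Proposition \ref{exist_EM} these maps are self-dual, so $\mathcal{K} \cong \mathcal{K}^*$, whence $\delta = \delta^*$ and $d_{min} = \min\{\delta, \delta^*\}$ is simply the length of a shortest non-contractible cycle of $\mathcal{K}$. The reduction I would exploit is that every contractible cycle of $\mathcal{K}$ has length at least $k$, where $k = 2m_1-1$ or $k = 2m_1$. This follows by passing to the universal cover: the unique simply-connected equivelar map of type $[k^k]$ is the regular tessellation $\{k,k\}$ of the hyperbolic plane (here $k \geq 5$), and a contractible cycle lifts to a closed cycle there, which must bound a disk and hence have length at least the face size $k$. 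Therefore any cycle of length strictly less than $k$ is automatically non-contractible, and the shortest non-contractible cycle equals the girth of $\mathcal{K}$ as soon as that girth is below $k$. Since $k \geq 5 > 4$, it is enough to prove that $\mathcal{K}$ has girth exactly $4$.

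For the lower bound $d_{min} \geq 4$, I would establish that $\mathcal{K}$ is triangle-free. Lengths $1$ and $2$ are impossible since the underlying graph is simple, and a $3$-cycle would have length $3 < k$, hence be non-contractible by the reduction above; ruling it out therefore amounts to checking that in Dutta's explicit construction no two neighbours of a common vertex are themselves adjacent. This local adjacency check is the step where the combinatorics of the construction enters.

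For the upper bound $d_{min} \leq 4$, I would exhibit one non-contractible $4$-cycle. Because $4 < k$, such a cycle cannot bound a disk, so its existence alone certifies $d_{min} \leq 4$, and together with girth $\geq 4$ this pins $d_{min} = 4$. The cycle would be read off from the global identification pattern of the self-dual map, with the two families ($k$ odd and $k$ even) and the parameter $m_2$ treated uniformly.

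The main obstacle is precisely this reliance on the explicit construction underlying Proposition \ref{exist_EM}: both triangle-freeness and the existence of a short non-contractible $4$-cycle are global statements about how the map closes up, so the proof will hinge either on explicit adjacency data valid for all $m_1 \geq 3$, $m_2 \geq 0$, or on a symmetry argument replacing the case-by-case pictures.
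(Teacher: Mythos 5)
Your plan coincides with the paper's proof: the paper exhibits the explicit $4$-cycles $C(3^{m_1-1}+2m_2,\,1,\,2,\,3^{m_1-1}+2m_2+1)$ and $C(2\times 3^{m_1-1}+m_2,\,1,\,2,\,2\times 3^{m_1-1}+m_2+1)$ read off from Dutta's face lists, notes that they are non-contractible because every face is a $(2m_1-1)$-gon or $2m_1$-gon with $2m_1-1\geq 5$, and rules out length-$3$ cycles by exactly the neighbour-adjacency check you describe, performed at a single vertex of each map. The only caveat is that the two steps you defer --- triangle-freeness and the exhibition of the short cycle --- are essentially the entire content of the paper's argument, so your proposal is a correct outline rather than a completed proof; on the other hand, your universal-cover justification for why any cycle of length less than $k$ is automatically non-contractible, and your explicit appeal to self-duality to dispose of the dual map, are spelled out more carefully than in the paper.
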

\begin{proof}
	From \cite{dutta05}, for $ m_1\geq3 $ and $ m_2\geq 0 $, equivelar maps of types $ [{(2m_1-1)}^{2m_1-1}] $ and $ [{(2m_1)}^{2m_1}] $ are as follows:
		\begin{eqnarray}
	\mathcal{K}_{2m_1-1,2 \times (3^{m_1-1}+2m_2-1)} & = & \{F_{j,2m_1-1}: 1\leq j \leq 2 \times (3^{m_1-1}+2m_2-1)\} \nonumber \\
	\mathcal{K}_{2m_1,3^{m_1}+2m_2-1} & = & \{F_{j,2m_1}: 1\leq j \leq 3^{m_1}+2m_2-1 \} \nonumber
	\end{eqnarray}
	where $~ F_{j,2m_1-1} ~$ and $ ~F_{j,2m_1} ~$ are faces of $ \mathcal{K}_{2m_1-1,2 \times (3^{m_1-1}+2m_2-1)} $ and $ \mathcal{K}_{2m_1,3^{m_1}+2m_2-1} $, respectively{,} and are given by
	\begin{eqnarray}
	F_{j,2m_1-1} & = & (j+a_1, j+a_2, \dots , j+a_{2m_1-3}, j+a_{2m_1-2}+m_2, j+a_{2m_1-1}+2m_2) \nonumber \\
	F_{j,2m_1} & = & (j+a_1, j+a_2, \dots , j+a_{2m_1-2}, j+a_{2m_1-1}, j+a_{2m_1}+m_2) \nonumber
	\end{eqnarray}
	with $ a_{2n-1}=3^{n-1}-1 $, $ a_{2n}=2\times 3^{n-1}-1 $ for $ n\geq 1 $. Therefore, $ F_{j,2m_1-1} $ and $ F_{j,2m_1} $ are $ (2m_1-1) $-cycles and $ (2m_1) $-cycles, respectively, with vertices from $ \mathbb{Z}_{2 \times (3^{m_1-1}+2m_2-1)} $ and $ \mathbb{Z}_{3^{m_1}+2m_2-1} $, respectively. Thus, every vertex in $ \mathcal{K}_{2m_1-1,2 \times (3^{m_1-1}+2m_2-1)} $ contains $ 2m_1-1 $ faces, and every vertex in $ \mathcal{K}_{2m_1,3^{m_1}+2m_2-1} $ contains $ 2m_1 $ faces. Now we have
			\begin{eqnarray}
			F_{1,2m_1-1} &=& (1,2,3,\dots , 3^{m_1-2}, 2\times 3^{m_1-2}+m_2, 3^{m_1-1}+2m_2) \label{eq:F1(2m1-1)} \\
			F_{2,2m_1-1} &=& (2,3,4, \dots , 2\times 3^{m_1-1}+m_2+1, 3^{m_1-1}+2m_2+1) \label{eq:F2(2m1-1)} \\
			F_{3^{m_1-1}+2m_2-1,2m_1-1} &=& (3^{m_1-1}+2m_2-1, 3^{m_1-1}+2m_2, 3^{m_1-1}+2m_2+1,  \nonumber\\
			&&3^{m_1-1}+2m_2+4, \dots , 2\times(3^{m_1-1}+2m_2-1)) \label{eq:F3(2m1-1)}\\
			F_{3^{m_1-1}+2m_2,2m_1-1} &=& (3^{m_1-1}+2m_2, 3^{m_1-1}+2m_2+1, 3^{m_1-1}+2m_2+2, \dots , \nonumber \\
			&&2\times(3^{m_1-1}+2m_2)-1) \label{eq:F4(2m1-1)}\\
			F_{3^{m_1-1}+2m_2+1,2m_1-1} &=& (3^{m_1-1}+2m_2+1, 3^{m_1-1}+2m_2+2, 3^{m_1-1}+2m_2+3, \dots , 2) \label{eq:F5(2m1-1)}\\
			F_{1,2m_1} &=& (1,2,3, \dots , 2\times 3^{m_1-2}, 3^{m_1-2}, 2\times 3^{m_1-1}+m_2) \label{eq:F1(2m1)} \\
			F_{2,2m_1} &=& (2,3,4, \dots , 3^{m_1-1}+1, 2\times 3^{m_1-1}+m_2+1) \label{eq:F2(2m1)} \\
			F_{2\times 3^{m_1-1}+m_2-1,2m_1} &=& (2\times 3^{m_1-1}+m_2-1, 2\times 3^{m_1-1}+m_2, 2\times 3^{m_1-1}+m_2+1, \nonumber \\
			&& 2\times 3^{m_1-1}+m_2+6, \dots, 4\times 3^{m_1-1}+2m_2-2) \label{eq:F3(2m1)} \\
			F_{2\times 3^{m_1-1}+m_2,2m_1} &=& (2\times 3^{m_1-1}+m_2, 2\times 3^{m_1-1}+m_2+1, 2\times 3^{m_1-1}+m_2+2, \dots , \nonumber \\
			&& 4\times 3^{m_1-1}+2m_2-1) \label{eq:F4(2m1)} \\
			F_{2\times 3^{m_1-1}+m_2+1,2m_1} &=& (2\times 3^{m_1-1}+m_2+1, 2\times 3^{m_1-1}+m_2+2, 2\times 3^{m_1-1}+m_2+3, \nonumber \\
			&& \dots,3^{m_1-1}+1)  \label{eq:F5(2m1)}	
			\end{eqnarray}
	From Equations (\ref{eq:F1(2m1-1)}), (\ref{eq:F2(2m1-1)}) and (\ref{eq:F3(2m1-1)}), it is clear that $ C(3^{m_1-1}+2m_2, 1, 2, 3^{m_1-1}+2m_2+1) $ is a cycle in the equivelar map of type $ [{(2m_1-1)}^{2m_1-1}] ${,} and from Equations (\ref{eq:F1(2m1)}), (\ref{eq:F2(2m1)}), and (\ref{eq:F3(2m1)}), it is clear that $ C(2\times 3^{m_1-1}+m_2, 1, 2, 2\times 3^{m_1-1}+m_2+1 ) $ is a cycle in the equivelar map of type $ [{(2m_1)}^{2m_1}] $. Clearly, faces in equivelar maps of type $ [{(2m_1-1)}^{2m_1-1}] $ are $ (2m_1-1) $-gons, and faces in equivelar maps of type $ [{(2m_1)}^{2m_1}] $ are $ 2m_1 $-gons. Furthermore, $ (2m_1-1)\geq5 $ and $ 2m_1\geq 6 $ for $ m_1\geq3 $. Therefore, the cycles $ C(3^{m_1-1}+2m_2, 1, 2, 3^{m_1-1}+2m_2+1) $ and $ C(2\times 3^{m_1-1}+m_2, 1, 2, 2\times 3^{m_1-1}+m_2+1 ) $ are non-contractible in the respective equivelar maps, and hence $ d_{min}\leq 4 $ in both classes.
	
	{We know that if} $ C(c_1,c_2,c_3) $ is a cycle of length three in a map, then $ c_3 $ is adjacent to the vertex $ c_1 $. {Using this property, we} show that equivelar maps of {types} $ [{(2m_1-1)}^{2m_1-1}] $ and $ [{(2m_1)}^{2m_1}] $ {do} not contain a non-contractible cycle of length three. Here, it is sufficient to show that there does not exist any cycle of length three which contains the vertex $ 3^{m_1-1}+2m_2+1 $ ({resp. }$2\times 3^{m_1-1}+m_2+1$) in an equivelar {map} of type $ [{(2m_1-1)}^{2m_1-1}] $ ({resp. }$ [{(2m_1)}^{2m_1}] $). From the above faces (Eq. (\ref{eq:F1(2m1-1)}$ - $\ref{eq:F5(2m1)})), it is clear that the vertex $ 3^{m_1-1}+2m_2+1 $ is adjacent to the vertices $ 2 $, $ 3^{m_1-1}+2m_2+2 $, $ 3^{m_1-1}+2m_2 $, $ 3^{m_1-1}+2m_2+4 , \dots $, $ 2\times 3^{m_1-1}+m_2+1 $, and that the vertex $ 2\times 3^{m_1-1}+m_2+1 $ is adjacent to $ 3^{m_1-1}+1 $, $ 2\times 3^{m_1-1}+m_2+2 $, $ 2\times 3^{m_1-1}+m_2 $, $ 2\times 3^{m_1-1}+m_2+6,\dots $, $ 2 $. Therefore, $ 3^{m_1-1}+2m_2+1 $ is not adjacent to $ 1 $ or $ 3 $ in equivelar maps of type $ [{(2m_1-1)}^{2m_1-1}] $, and similarly the vertex $ 2\times 3^{m_1-1}+m_2+1 $ is not adjacent to $ 1 $ or $ 3 $ in an equivelar {map} of type $ [{(2m_1)}^{2m_1}] $. Therefore, equivelar maps of {types} $ [{(2m_1-1)}^{2m_1-1}] $ and $ [{(2m_1)}^{2m_1}] $ {do} not contain a non-contractible cycle of length three. Thus, $ d_{min}=4 $ in both the classes of equivelar maps of {types} $ [{(2m_1-1)}^{2m_1-1}] $ and $ [{(2m_1)}^{2m_1}] $. This completes the proof.
\end{proof}

\subsection{Codes associated with equivelar maps of type {$ \lowercase{[{(2m_1-1)}^{2m_1-1}]} $} }\label{code 2m1-1}
Let $ \mathcal{K} $ be an equivelar map of type $ [{(2m_1-1)}^{2m_1-1}] $. Then, by Proposition \ref{exist_EM}, $ |f_0|=2 \times (3^{m_1-1}+2m_2-1)$. Thus, $ |f_1|=(2m_1-1)(3^{m_1-1}+2m_2-1) $ and $ |f_2|=2 \times (3^{m_1-1}+2m_2-1) $. By \Echar{} equation, $ \chi(\mathcal{K})=|f_0|-|f_1|+|f_2|=(5-2m_1)(3^{m_1-1}+2m_2-1) $. Therefore, $ k=2-\chi=2+(2m_1-5)(3^{m_1-1}+2m_2-1) $, and by Lemma \ref{dmin for 2m1-1}, $ d_{min}=4 $. Thus, $ [[n,k,d_{min}]] =$ $ [[(2m_1-1)(3^{m_1-1}+2m_2-1), 2+(2m_1-5)(3^{m_1-1}+2m_2-1),4]] $. The encoding rate of the code $ \frac{k}{n}=\frac{2+(2m_1-5)(3^{m_1-1}+2m_2-1)}{(2m_1-1)(3^{m_1-1}+2m_2-1)} =$ $ \frac{2}{(2m_1-1)(3^{m_1-1}+2m_2-1)}+\frac{2-\frac{5}{m_1}}{2-\frac{1}{m_1}} $ $ \rightarrow 1 $ as $ m_1,m_2\rightarrow \infty $.
\subsection{Codes associated with equivelar maps of type {$ \lowercase{[{(2m_1)}^{2m_1}]} $}}\label{code 2m1}
Let $ \mathcal{K} $ be an equivelar map of type $ [{(2m_1)}^{2m_1}] $. Then, by Proposition \ref{exist_EM}, $ |f_0|=(3^{m_1}+2m_2-1)$. Thus, $ |f_1|=m_1(3^{m_1-1}+2m_2-1) $ and $ |f_2|=(3^{m_1}+2m_2-1) $. By \Echar{} equation, $ \chi(\mathcal{K})=|f_0|-|f_1|+|f_2|=(2-m_1)(3^{m_1}+2m_2-1) $.  Therefore, $ k=2+(m_1-2)(3^{m_1}+2m_2-1) $, and by Lemma \ref{dmin for 2m1-1}, $ d_{min}=4 $. Thus, $ [[n,k,d_{min}]]= $ $ [[m_1(3^{m_1}+2m_2-1),2+(m_1-2)(3^{m_1}+2m_2-1),4 ]] $. The encoding rate of the code $ \frac{k}{n}=\frac{m_1(3^{m_1}+2m_2-1)}{2+(m_1-2)(3^{m_1}+2m_2-1)} =$ $ \frac{2}{2+(m_1-2)(3^{m_1}+2m_2-1)} +(1-\frac{2}{m_1})\rightarrow 1$ as $ m_1,m_2\rightarrow \infty $.

As the encoding rate of the quantum codes presented in Section \mbox{\ref{code 2m1-1}} and \mbox{\ref{code 2m1}} is $1$, these codes can therefore be considered as good codes. A comparison of these codes is presented at Sl No 12 and 13 in Table \mbox{\ref{quantum_code}}.
	
	\scriptsize{	
		\begin{eqnarray}
		\setcounter{MaxMatrixCols}{42}
		H_{X} =
		\begin{pmatrix}
		1 ~ 1 ~ 1 ~ 1 ~ 1 ~ 1 ~ 1 ~ 0 ~ 0 ~ 0 ~ 0 ~ 0 ~ 0 ~ 0 ~ 0 ~ 0 ~ 0 ~ 0 ~ 0 ~ 0 ~ 0  ~
		0 ~ 0 ~ 0 ~ 0 ~ 0 ~ 0 ~ 0 ~ 0 ~ 0 ~ 0 ~ 0 ~ 0 ~ 0 ~ 0 ~ 0 ~ 0 ~ 0 ~ 0 ~ 0 ~ 0 ~ 0 \\
		1 ~ 0 ~ 0 ~ 0 ~ 0 ~ 0 ~ 0 ~ 1 ~ 1 ~ 1 ~ 1 ~ 1 ~ 1 ~ 0 ~ 0 ~ 0 ~ 0 ~ 0 ~ 0 ~ 0 ~ 0  ~
		0 ~ 0 ~ 0 ~ 0 ~ 0 ~ 0 ~ 0 ~ 0 ~ 0 ~ 0 ~ 0 ~ 0 ~ 0 ~ 0 ~ 0 ~ 0 ~ 0 ~ 0 ~ 0 ~ 0 ~ 0 \\
		0 ~ 1 ~ 0 ~ 0 ~ 0 ~ 0 ~ 0 ~ 0 ~ 0 ~ 0 ~ 0 ~ 1 ~ 0 ~ 1 ~ 1 ~ 1 ~ 1 ~ 1 ~ 0 ~ 0 ~ 0  ~
		0 ~ 0 ~ 0 ~ 0 ~ 0 ~ 0 ~ 0 ~ 0 ~ 0 ~ 0 ~ 0 ~ 0 ~ 0 ~ 0 ~ 0 ~ 0 ~ 0 ~ 0 ~ 0 ~ 0 ~ 0 \\
		0 ~ 0 ~ 0 ~ 0 ~ 0 ~ 0 ~ 0 ~ 0 ~ 0 ~ 0 ~ 0 ~ 0 ~ 1 ~ 0 ~ 0 ~ 0 ~ 0 ~ 1 ~ 1 ~ 1 ~ 1  ~
		1 ~ 1 ~ 0 ~ 0 ~ 0 ~ 0 ~ 0 ~ 0 ~ 0 ~ 0 ~ 0 ~ 0 ~ 0 ~ 0 ~ 0 ~ 0 ~ 0 ~ 0 ~ 0 ~ 0 ~ 0 \\
		0 ~ 0 ~ 1 ~ 0 ~ 0 ~ 0 ~ 0 ~ 0 ~ 0 ~ 0 ~ 0 ~ 0 ~ 0 ~ 0 ~ 0 ~ 0 ~ 1 ~ 1 ~ 1 ~ 0 ~ 0  ~
		0 ~ 0 ~ 1 ~ 1 ~ 1 ~ 1 ~ 0 ~ 0 ~ 0 ~ 0 ~ 0 ~ 0 ~ 0 ~ 0 ~ 0 ~ 0 ~ 0 ~ 0 ~ 0 ~ 0 ~ 0 \\
		0 ~ 0 ~ 0 ~ 0 ~ 0 ~ 0 ~ 0 ~ 0 ~ 0 ~ 1 ~ 0 ~ 0 ~ 0 ~ 0 ~ 0 ~ 0 ~ 0 ~ 0 ~ 0 ~ 0 ~ 0 ~
		0 ~ 1 ~ 0 ~ 0 ~ 0 ~ 1 ~ 1 ~ 1 ~ 1 ~ 1 ~ 0 ~ 0 ~ 0 ~ 0 ~ 0 ~ 0 ~ 0 ~ 0 ~ 0 ~ 0 ~ 0 \\
		0 ~ 0 ~ 0 ~ 1 ~ 0 ~ 0 ~ 0 ~ 0 ~ 0 ~ 0 ~ 0 ~ 0 ~ 0 ~ 1 ~ 0 ~ 0 ~ 0 ~ 0 ~ 0 ~ 0 ~ 0 ~
		0 ~ 0 ~ 0 ~ 0 ~ 1 ~ 0 ~ 1 ~ 0 ~ 0 ~ 0 ~ 1 ~ 1 ~ 1 ~ 0 ~ 0 ~ 0 ~ 0 ~ 0 ~ 0 ~ 0 ~ 0 \\
		0 ~ 0 ~ 0 ~ 0 ~ 0 ~ 0 ~ 0 ~ 1 ~ 0 ~ 0 ~ 0 ~ 0 ~ 0 ~ 0 ~ 0 ~ 0 ~ 0 ~ 0 ~ 0 ~ 1 ~ 0 ~
		0 ~ 0 ~ 0 ~ 0 ~ 0 ~ 0 ~ 0 ~ 1 ~ 0 ~ 0 ~ 0 ~ 1 ~ 0 ~ 1 ~ 1 ~ 1 ~ 0 ~ 0 ~ 0 ~ 0 ~ 0 \\
		0 ~ 0 ~ 0 ~ 0 ~ 1 ~ 0 ~ 0 ~ 0 ~ 0 ~ 0 ~ 0 ~ 0 ~ 0 ~ 0 ~ 1 ~ 0 ~ 0 ~ 0 ~ 0 ~ 0 ~ 0 ~
		0 ~ 0 ~ 1 ~ 0 ~ 0 ~ 0 ~ 0 ~ 0 ~ 0 ~ 0 ~ 0 ~ 0 ~ 1 ~ 0 ~ 0 ~ 1 ~ 1 ~ 1 ~ 0 ~ 0 ~ 0 \\
		0 ~ 0 ~ 0 ~ 0 ~ 0 ~ 0 ~ 0 ~ 0 ~ 1 ~ 0 ~ 0 ~ 0 ~ 0 ~ 0 ~ 0 ~ 0 ~ 0 ~ 0 ~ 0 ~ 0 ~ 0 ~
		1 ~ 0 ~ 0 ~ 0 ~ 0 ~ 0 ~ 0 ~ 0 ~ 0 ~ 1 ~ 0 ~ 0 ~ 0 ~ 1 ~ 0 ~ 0 ~ 1 ~ 0 ~ 1 ~ 1 ~ 0 \\
		0 ~ 0 ~ 0 ~ 0 ~ 0 ~ 1 ~ 0 ~ 0 ~ 0 ~ 0 ~ 0 ~ 0 ~ 0 ~ 0 ~ 0 ~ 1 ~ 0 ~ 0 ~ 0 ~ 0 ~ 0 ~
		0 ~ 0 ~ 0 ~ 1 ~ 0 ~ 0 ~ 0 ~ 0 ~ 0 ~ 0 ~ 1 ~ 0 ~ 0 ~ 0 ~ 0 ~ 0 ~ 0 ~ 1 ~ 1 ~ 0 ~ 1 \\
		0 ~ 0 ~ 0 ~ 0 ~ 0 ~ 0 ~ 1 ~ 0 ~ 0 ~ 0 ~ 1 ~ 0 ~ 0 ~ 0 ~ 0 ~ 0 ~ 0 ~ 0 ~ 0 ~ 0 ~ 1 ~
		0 ~ 0 ~ 0 ~ 0 ~ 0 ~ 0 ~ 0 ~ 0 ~ 1 ~ 0 ~ 0 ~ 0 ~ 0 ~ 0 ~ 1 ~ 0 ~ 0 ~ 0 ~ 0 ~ 1 ~ 1 \\
		\end{pmatrix}_{12\times 42} \label{hx for 37}
		\end{eqnarray}}
		%
		\scriptsize{
		\begin{eqnarray}
		\setcounter{MaxMatrixCols}{42}
		H_{Z} =
		\begin{pmatrix}
		0 ~ 0 ~ 0 ~ 0 ~ 0 ~ 0 ~ 0 ~ 0 ~ 0 ~ 0 ~ 0 ~ 0 ~ 0 ~ 0 ~ 0 ~ 0 ~ 0 ~ 0 ~ 0 ~ 0 ~ 0 ~
		0 ~ 0 ~ 0 ~ 0 ~ 0 ~ 0 ~ 0 ~ 0 ~ 0 ~ 0 ~ 0 ~ 0 ~ 0 ~ 0 ~ 0 ~ 0 ~ 0 ~ 0 ~ 1 ~ 1 ~ 1 \\
		0 ~ 0 ~ 0 ~ 0 ~ 0 ~ 0 ~ 0 ~ 0 ~ 0 ~ 0 ~ 0 ~ 0 ~ 0 ~ 0 ~ 0 ~ 0 ~ 0 ~ 0 ~ 0 ~ 0 ~ 1 ~
		1 ~ 0 ~ 0 ~ 0 ~ 0 ~ 0 ~ 0 ~ 0 ~ 0 ~ 0 ~ 0 ~ 0 ~ 0 ~ 0 ~ 0 ~ 0 ~ 0 ~ 0 ~ 0 ~ 1 ~ 0 \\
		0 ~ 0 ~ 0 ~ 0 ~ 0 ~ 0 ~ 0 ~ 0 ~ 0 ~ 0 ~ 0 ~ 1 ~ 1 ~ 0 ~ 0 ~ 0 ~ 0 ~ 1 ~ 0 ~ 0 ~ 0 ~
		0 ~ 0 ~ 0 ~ 0 ~ 0 ~ 0 ~ 0 ~ 0 ~ 0 ~ 0 ~ 0 ~ 0 ~ 0 ~ 0 ~ 0 ~ 0 ~ 0 ~ 0 ~ 0 ~ 0 ~ 0 \\
		0 ~ 0 ~ 0 ~ 0 ~ 0 ~ 0 ~ 0 ~ 1 ~ 0 ~ 0 ~ 0 ~ 0 ~ 1 ~ 0 ~ 0 ~ 0 ~ 0 ~ 0 ~ 0 ~ 1 ~ 0 ~
		0 ~ 0 ~ 0 ~ 0 ~ 0 ~ 0 ~ 0 ~ 0 ~ 0 ~ 0 ~ 0 ~ 0 ~ 0 ~ 0 ~ 0 ~ 0 ~ 0 ~ 0 ~ 0 ~ 0 ~ 0 \\
		0 ~ 0 ~ 0 ~ 0 ~ 0 ~ 0 ~ 0 ~ 0 ~ 0 ~ 0 ~ 0 ~ 0 ~ 0 ~ 0 ~ 0 ~ 0 ~ 0 ~ 0 ~ 0 ~ 1 ~ 1 ~
		0 ~ 0 ~ 0 ~ 0 ~ 0 ~ 0 ~ 0 ~ 0 ~ 0 ~ 0 ~ 0 ~ 0 ~ 0 ~ 0 ~ 1 ~ 0 ~ 0 ~ 0 ~ 0 ~ 0 ~ 0 \\
		0 ~ 0 ~ 0 ~ 0 ~ 0 ~ 0 ~ 0 ~ 0 ~ 0 ~ 0 ~ 0 ~ 0 ~ 0 ~ 0 ~ 0 ~ 0 ~ 0 ~ 0 ~ 0 ~ 0 ~ 0 ~
		0 ~ 0 ~ 0 ~ 0 ~ 0 ~ 0 ~ 0 ~ 1 ~ 1 ~ 0 ~ 0 ~ 0 ~ 0 ~ 0 ~ 1 ~ 0 ~ 0 ~ 0 ~ 0 ~ 0 ~ 0 \\
		0 ~ 0 ~ 0 ~ 0 ~ 0 ~ 0 ~ 0 ~ 0 ~ 0 ~ 0 ~ 0 ~ 0 ~ 0 ~ 0 ~ 0 ~ 0 ~ 0 ~ 0 ~ 0 ~ 0 ~ 0 ~
		0 ~ 0 ~ 0 ~ 0 ~ 0 ~ 0 ~ 0 ~ 0 ~ 0 ~ 0 ~ 0 ~ 1 ~ 1 ~ 0 ~ 0 ~ 1 ~ 0 ~ 0 ~ 0 ~ 0 ~ 0 \\
		0 ~ 0 ~ 0 ~ 0 ~ 0 ~ 0 ~ 0 ~ 0 ~ 0 ~ 0 ~ 0 ~ 0 ~ 0 ~ 0 ~ 0 ~ 0 ~ 0 ~ 0 ~ 0 ~ 0 ~ 0 ~
		0 ~ 0 ~ 0 ~ 0 ~ 0 ~ 0 ~ 1 ~ 1 ~ 0 ~ 0 ~ 0 ~ 1 ~ 0 ~ 0 ~ 0 ~ 0 ~ 0 ~ 0 ~ 0 ~ 0 ~ 0 \\
		0 ~ 0 ~ 0 ~ 0 ~ 0 ~ 0 ~ 0 ~ 0 ~ 0 ~ 0 ~ 0 ~ 0 ~ 0 ~ 0 ~ 0 ~ 0 ~ 0 ~ 0 ~ 0 ~ 0 ~ 0 ~
		0 ~ 0 ~ 0 ~ 0 ~ 1 ~ 1 ~ 1 ~ 0 ~ 0 ~ 0 ~ 0 ~ 0 ~ 0 ~ 0 ~ 0 ~ 0 ~ 0 ~ 0 ~ 0 ~ 0 ~ 0 \\
		0 ~ 0 ~ 0 ~ 0 ~ 0 ~ 0 ~ 0 ~ 0 ~ 0 ~ 0 ~ 0 ~ 0 ~ 0 ~ 0 ~ 0 ~ 0 ~ 0 ~ 0 ~ 0 ~ 0 ~ 0 ~
		0 ~ 0 ~ 0 ~ 1 ~ 1 ~ 0 ~ 0 ~ 0 ~ 0 ~ 0 ~ 1 ~ 0 ~ 0 ~ 0 ~ 0 ~ 0 ~ 0 ~ 0 ~ 0 ~ 0 ~ 0 \\
		0 ~ 0 ~ 0 ~ 0 ~ 0 ~ 0 ~ 0 ~ 0 ~ 0 ~ 0 ~ 0 ~ 0 ~ 0 ~ 0 ~ 0 ~ 0 ~ 0 ~ 0 ~ 0 ~ 0 ~ 0 ~
		0 ~ 0 ~ 0 ~ 0 ~ 0 ~ 0 ~ 0 ~ 0 ~ 0 ~ 0 ~ 0 ~ 0 ~ 0 ~ 0 ~ 0 ~ 0 ~ 1 ~ 1 ~ 1 ~ 0 ~ 0 \\
		0 ~ 0 ~ 0 ~ 0 ~ 0 ~ 0 ~ 0 ~ 0 ~ 0 ~ 0 ~ 0 ~ 0 ~ 0 ~ 0 ~ 1 ~ 1 ~ 0 ~ 0 ~ 0 ~ 0 ~ 0 ~
		0 ~ 0 ~ 0 ~ 0 ~ 0 ~ 0 ~ 0 ~ 0 ~ 0 ~ 0 ~ 0 ~ 0 ~ 0 ~ 0 ~ 0 ~ 0 ~ 0 ~ 1 ~ 0 ~ 0 ~ 0 \\
		0 ~ 0 ~ 0 ~ 0 ~ 0 ~ 0 ~ 0 ~ 0 ~ 0 ~ 0 ~ 0 ~ 0 ~ 0 ~ 1 ~ 0 ~ 1 ~ 0 ~ 0 ~ 0 ~ 0 ~ 0 ~
		0 ~ 0 ~ 0 ~ 0 ~ 0 ~ 0 ~ 0 ~ 0 ~ 0 ~ 0 ~ 1 ~ 0 ~ 0 ~ 0 ~ 0 ~ 0 ~ 0 ~ 0 ~ 0 ~ 0 ~ 0 \\
		0 ~ 1 ~ 0 ~ 1 ~ 0 ~ 0 ~ 0 ~ 0 ~ 0 ~ 0 ~ 0 ~ 0 ~ 0 ~ 1 ~ 0 ~ 0 ~ 0 ~ 0 ~ 0 ~ 0 ~ 0 ~
		0 ~ 0 ~ 0 ~ 0 ~ 0 ~ 0 ~ 0 ~ 0 ~ 0 ~ 0 ~ 0 ~ 0 ~ 0 ~ 0 ~ 0 ~ 0 ~ 0 ~ 0 ~ 0 ~ 0 ~ 0 \\
		0 ~ 0 ~ 0 ~ 1 ~ 1 ~ 0 ~ 0 ~ 0 ~ 0 ~ 0 ~ 0 ~ 0 ~ 0 ~ 0 ~ 0 ~ 0 ~ 0 ~ 0 ~ 0 ~ 0 ~ 0 ~
		0 ~ 0 ~ 0 ~ 0 ~ 0 ~ 0 ~ 0 ~ 0 ~ 0 ~ 0 ~ 0 ~ 0 ~ 1 ~ 0 ~ 0 ~ 0 ~ 0 ~ 0 ~ 0 ~ 0 ~ 0 \\
		0 ~ 0 ~ 1 ~ 0 ~ 1 ~ 0 ~ 0 ~ 0 ~ 0 ~ 0 ~ 0 ~ 0 ~ 0 ~ 0 ~ 0 ~ 0 ~ 0 ~ 0 ~ 0 ~ 0 ~ 0 ~
		0 ~ 0 ~ 1 ~ 0 ~ 0 ~ 0 ~ 0 ~ 0 ~ 0 ~ 0 ~ 0 ~ 0 ~ 0 ~ 0 ~ 0 ~ 0 ~ 0 ~ 0 ~ 0 ~ 0 ~ 0 \\
		0 ~ 0 ~ 0 ~ 0 ~ 0 ~ 0 ~ 0 ~ 0 ~ 0 ~ 0 ~ 0 ~ 0 ~ 0 ~ 0 ~ 1 ~ 0 ~ 1 ~ 0 ~ 0 ~ 0 ~ 0 ~
		0 ~ 0 ~ 1 ~ 0 ~ 0 ~ 0 ~ 0 ~ 0 ~ 0 ~ 0 ~ 0 ~ 0 ~ 0 ~ 0 ~ 0 ~ 0 ~ 0 ~ 0 ~ 0 ~ 0 ~ 0 \\
		0 ~ 0 ~ 0 ~ 0 ~ 0 ~ 0 ~ 0 ~ 0 ~ 0 ~ 0 ~ 0 ~ 0 ~ 0 ~ 0 ~ 0 ~ 0 ~ 1 ~ 1 ~ 1 ~ 0 ~ 0 ~
		0 ~ 0 ~ 0 ~ 0 ~ 0 ~ 0 ~ 0 ~ 0 ~ 0 ~ 0 ~ 0 ~ 0 ~ 0 ~ 0 ~ 0 ~ 0 ~ 0 ~ 0 ~ 0 ~ 0 ~ 0 \\
		0 ~ 0 ~ 1 ~ 0 ~ 0 ~ 1 ~ 0 ~ 0 ~ 0 ~ 0 ~ 0 ~ 0 ~ 0 ~ 0 ~ 0 ~ 0 ~ 0 ~ 0 ~ 0 ~ 0 ~ 0 ~
		0 ~ 0 ~ 0 ~ 1 ~ 0 ~ 0 ~ 0 ~ 0 ~ 0 ~ 0 ~ 0 ~ 0 ~ 0 ~ 0 ~ 0 ~ 0 ~ 0 ~ 0 ~ 0 ~ 0 ~ 0 \\
		0 ~ 0 ~ 0 ~ 0 ~ 0 ~ 1 ~ 1 ~ 0 ~ 0 ~ 0 ~ 0 ~ 0 ~ 0 ~ 0 ~ 0 ~ 0 ~ 0 ~ 0 ~ 0 ~ 0 ~ 0 ~
		0 ~ 0 ~ 0 ~ 0 ~ 0 ~ 0 ~ 0 ~ 0 ~ 0 ~ 0 ~ 0 ~ 0 ~ 0 ~ 0 ~ 0 ~ 0 ~ 0 ~ 0 ~ 0 ~ 0 ~ 1 \\
		1 ~ 0 ~ 0 ~ 0 ~ 0 ~ 0 ~ 1 ~ 0 ~ 0 ~ 0 ~ 1 ~ 0 ~ 0 ~ 0 ~ 0 ~ 0 ~ 0 ~ 0 ~ 0 ~ 0 ~ 0 ~
		0 ~ 0 ~ 0 ~ 0 ~ 0 ~ 0 ~ 0 ~ 0 ~ 0 ~ 0 ~ 0 ~ 0 ~ 0 ~ 0 ~ 0 ~ 0 ~ 0 ~ 0 ~ 0 ~ 0 ~ 0 \\
		1 ~ 1 ~ 0 ~ 0 ~ 0 ~ 0 ~ 0 ~ 0 ~ 0 ~ 0 ~ 1 ~ 0 ~ 0 ~ 0 ~ 0 ~ 0 ~ 0 ~ 0 ~ 0 ~ 0 ~ 0 ~
		0 ~ 0 ~ 0 ~ 0 ~ 0 ~ 0 ~ 0 ~ 0 ~ 0 ~ 0 ~ 0 ~ 0 ~ 0 ~ 0 ~ 0 ~ 0 ~ 0 ~ 0 ~ 0 ~ 0 ~ 0 \\
		0 ~ 0 ~ 0 ~ 0 ~ 0 ~ 0 ~ 0 ~ 0 ~ 1 ~ 0 ~ 1 ~ 0 ~ 0 ~ 0 ~ 0 ~ 0 ~ 0 ~ 0 ~ 0 ~ 0 ~ 0 ~
		0 ~ 0 ~ 0 ~ 0 ~ 0 ~ 0 ~ 0 ~ 0 ~ 1 ~ 0 ~ 0 ~ 0 ~ 0 ~ 0 ~ 0 ~ 0 ~ 0 ~ 0 ~ 0 ~ 0 ~ 0 \\
		0 ~ 0 ~ 0 ~ 0 ~ 0 ~ 0 ~ 0 ~ 0 ~ 1 ~ 1 ~ 0 ~ 0 ~ 0 ~ 0 ~ 0 ~ 0 ~ 0 ~ 0 ~ 0 ~ 0 ~ 0 ~
		0 ~ 0 ~ 0 ~ 0 ~ 0 ~ 0 ~ 0 ~ 0 ~ 0 ~ 1 ~ 0 ~ 0 ~ 0 ~ 0 ~ 0 ~ 0 ~ 0 ~ 0 ~ 0 ~ 0 ~ 0 \\
		0 ~ 0 ~ 0 ~ 0 ~ 0 ~ 0 ~ 0 ~ 1 ~ 1 ~ 0 ~ 0 ~ 0 ~ 0 ~ 0 ~ 0 ~ 0 ~ 0 ~ 0 ~ 0 ~ 0 ~ 0 ~
		0 ~ 0 ~ 0 ~ 0 ~ 0 ~ 0 ~ 0 ~ 0 ~ 0 ~ 0 ~ 0 ~ 0 ~ 0 ~ 1 ~ 0 ~ 0 ~ 0 ~ 0 ~ 0 ~ 0 ~ 0 \\
		0 ~ 0 ~ 0 ~ 0 ~ 0 ~ 0 ~ 0 ~ 0 ~ 0 ~ 0 ~ 0 ~ 0 ~ 0 ~ 0 ~ 0 ~ 0 ~ 0 ~ 0 ~ 0 ~ 0 ~ 0 ~
		0 ~ 0 ~ 0 ~ 0 ~ 0 ~ 0 ~ 0 ~ 0 ~ 0 ~ 0 ~ 0 ~ 0 ~ 0 ~ 1 ~ 0 ~ 1 ~ 1 ~ 0 ~ 0 ~ 0 ~ 0 \\
		0 ~ 0 ~ 0 ~ 0 ~ 0 ~ 0 ~ 0 ~ 0 ~ 0 ~ 0 ~ 0 ~ 0 ~ 0 ~ 0 ~ 0 ~ 0 ~ 0 ~ 0 ~ 0 ~ 0 ~ 0 ~
		1 ~ 1 ~ 0 ~ 0 ~ 0 ~ 0 ~ 0 ~ 0 ~ 0 ~ 1 ~ 0 ~ 0 ~ 0 ~ 0 ~ 0 ~ 0 ~ 0 ~ 0 ~ 0 ~ 0 ~ 0 \\
		0 ~ 0 ~ 0 ~ 0 ~ 0 ~ 0 ~ 0 ~ 0 ~ 0 ~ 0 ~ 0 ~ 0 ~ 0 ~ 0 ~ 0 ~ 0 ~ 0 ~ 0 ~ 1 ~ 0 ~ 0 ~
		0 ~ 1 ~ 0 ~ 0 ~ 0 ~ 1 ~ 0 ~ 0 ~ 0 ~ 0 ~ 0 ~ 0 ~ 0 ~ 0 ~ 0 ~ 0 ~ 0 ~ 0 ~ 0 ~ 0 ~ 0 \\
		\end{pmatrix}_{28\times 42} \label{hz for 37}
		\end{eqnarray}}
\setcounter{MaxMatrixCols}{40}
	\scriptsize{

		\begin{eqnarray}
		H_X=\setlength\arraycolsep{1pt}
		\begin{pmatrix}
		1	&1	&1	&1	&0	&0	&0	&0	&0	&0	&0	&0	&0	&0	&0	&0	&0	&0	&0	&0	&0	&0	&0	&0	&0	&0	&0	&0	&0	&0	&0	&0	&0	&0	&0	&0	&0	&0	&0	&0\\
		1	&0	&0	&0	&1	&1	&1	&0	&0	&0	&0	&0	&0	&0	&0	&0	&0	&0	&0	&0	&0	&0	&0	&0	&0	&0	&0	&0	&0	&0	&0	&0	&0	&0	&0	&0	&0	&0	&0	&0\\
		0	&0	&0	&0	&1	&0	&0	&1	&1	&1	&0	&0	&0	&0	&0	&0	&0	&0	&0	&0	&0	&0	&0	&0	&0	&0	&0	&0	&0	&0	&0	&0	&0	&0	&0	&0	&0	&0	&0	&0\\
		0	&1	&0	&0	&0	&0	&0	&1	&0	&0	&1	&1	&0	&0	&0	&0	&0	&0	&0	&0	&0	&0	&0	&0	&0	&0	&0	&0	&0	&0	&0	&0	&0	&0	&0	&0	&0	&0	&0	&0\\
		0	&0	&0	&0	&0	&0	&0	&0	&0	&0	&1	&0	&1	&1	&1	&0	&0	&0	&0	&0	&0	&0	&0	&0	&0	&0	&0	&0	&0	&0	&0	&0	&0	&0	&0	&0	&0	&0	&0	&0\\
		0	&0	&1	&0	&0	&0	&0	&0	&0	&0	&0	&0	&1	&0	&0	&1	&1	&0	&0	&0	&0	&0	&0	&0	&0	&0	&0	&0	&0	&0	&0	&0	&0	&0	&0	&0	&0	&0	&0	&0\\
		0	&0	&0	&0	&0	&0	&0	&0	&0	&0	&0	&0	&0	&0	&0	&1	&0	&1	&1	&1	&0	&0	&0	&0	&0	&0	&0	&0	&0	&0	&0	&0	&0	&0	&0	&0	&0	&0	&0	&0\\
		0	&0	&0	&1	&0	&0	&0	&0	&0	&0	&0	&0	&0	&0	&0	&0	&0	&1	&0	&0	&1	&1	&0	&0	&0	&0	&0	&0	&0	&0	&0	&0	&0	&0	&0	&0	&0	&0	&0	&0\\
		0	&0	&0	&0	&0	&0	&0	&0	&0	&0	&0	&0	&0	&0	&0	&0	&0	&0	&0	&0	&1	&0	&1	&1	&1	&0	&0	&0	&0	&0	&0	&0	&0	&0	&0	&0	&0	&0	&0	&0\\
		0	&0	&0	&0	&0	&0	&0	&0	&0	&1	&0	&0	&0	&0	&0	&0	&0	&0	&0	&0	&0	&0	&1	&0	&0	&1	&1	&0	&0	&0	&0	&0	&0	&0	&0	&0	&0	&0	&0	&0\\
		0	&0	&0	&0	&0	&0	&1	&0	&0	&0	&0	&0	&0	&1	&0	&0	&0	&0	&0	&0	&0	&0	&0	&0	&0	&0	&0	&1	&1	&0	&0	&0	&0	&0	&0	&0	&0	&0	&0	&0\\
		0	&0	&0	&0	&0	&0	&0	&0	&1	&0	&0	&0	&0	&0	&0	&0	&1	&0	&0	&0	&0	&0	&0	&0	&0	&0	&0	&1	&0	&1	&0	&0	&0	&0	&0	&0	&0	&0	&0	&0\\
		0	&0	&0	&0	&0	&0	&0	&0	&0	&0	&0	&0	&0	&0	&0	&0	&0	&0	&0	&0	&0	&0	&0	&0	&0	&1	&0	&0	&1	&0	&1	&1	&0	&0	&0	&0	&0	&0	&0	&0\\
		0	&0	&0	&0	&0	&0	&0	&0	&0	&0	&0	&0	&0	&0	&1	&0	&0	&0	&0	&0	&0	&1	&0	&0	&0	&0	&0	&0	&0	&0	&0	&0	&1	&0	&0	&0	&0	&0	&0	&1\\
		0	&0	&0	&0	&0	&0	&0	&0	&0	&0	&0	&0	&0	&0	&0	&0	&0	&0	&0	&0	&0	&0	&0	&1	&0	&0	&0	&0	&0	&0	&1	&0	&1	&1	&0	&0	&0	&0	&0	&0\\
		0	&0	&0	&0	&0	&0	&0	&0	&0	&1	&0	&0	&0	&0	&0	&0	&0	&0	&0	&0	&0	&0	&0	&0	&0	&0	&1	&0	&0	&0	&0	&0	&0	&0	&1	&1	&0	&0	&0	&0\\
		0	&0	&0	&0	&0	&0	&0	&0	&0	&0	&0	&0	&0	&0	&0	&0	&0	&0	&0	&0	&0	&0	&0	&0	&1	&0	&0	&0	&0	&0	&0	&0	&0	&0	&0	&1	&1	&1	&0	&0\\
		0	&0	&0	&0	&0	&0	&0	&0	&0	&0	&0	&0	&0	&0	&0	&0	&0	&0	&0	&0	&0	&0	&0	&0	&0	&0	&0	&0	&0	&1	&0	&1	&0	&0	&1	&0	&0	&0	&1	&0\\
		0	&0	&0	&0	&0	&0	&0	&0	&0	&0	&0	&1	&0	&0	&0	&0	&0	&0	&1	&0	&0	&0	&0	&0	&0	&0	&0	&0	&0	&0	&0	&0	&0	&0	&0	&0	&1	&0	&0	&1\\
		0	&0	&0	&0	&0	&0	&0	&0	&0	&0	&0	&0	&0	&0	&0	&0	&0	&0	&0	&1	&0	&0	&0	&0	&0	&0	&0	&0	&0	&0	&0	&0	&0	&1	&0	&0	&0	&1	&1	&0
		\end{pmatrix} \label{hx_sem}
		\end{eqnarray}}
	\scriptsize{
		\begin{eqnarray}
		H_Z=
		\setlength\arraycolsep{1pt}
		\begin{pmatrix}
		1 & 1 & 0 & 0 & 1 & 0 & 0 & 1 & 0 & 0 & 0 & 0 & 0 & 0 & 0 & 0 & 0 & 0 & 0 & 0 & 0 & 0 & 0 & 0 & 0 & 0 & 0 & 0 & 0 & 0 & 0 & 0 & 0 & 0 & 0 & 0 & 0 & 0 & 0 & 0\\
		1	&0	&0	&1	&0	&1	&0	&0	&0	&0	&0	&0	&0	&0	&0	&0	&0	&0	&0	&0	&1	&0	&1	&0	&0	&0	&0	&0	&0	&0	&0	&0	&0	&0	&0	&0	&0	&0	&0	&0\\
		0	&1	&1	&0	&0	&0	&0	&0	&0	&0	&1	&0	&1	&0	&0	&0	&0	&0	&0	&0	&0	&0	&0	&0	&0	&0	&0	&0	&0	&0	&0	&0	&0	&0	&0	&0	&0	&0	&0	&0\\
		0	&0	&1	&1	&0	&0	&0	&0	&0	&0	&0	&0	&0	&0	&0	&1	&0	&1	&0	&0	&0	&0	&0	&0	&0	&0	&0	&0	&0	&0	&0	&0	&0	&0	&0	&0	&0	&0	&0	&0\\
		0	&0	&0	&0	&1	&0	&1	&0	&1	&0	&0	&0	&0	&0	&0	&0	&0	&0	&0	&0	&0	&0	&0	&0	&0	&0	&0	&1	&0	&0	&0	&0	&0	&0	&0	&0	&0	&0	&0	&0\\
		0	&0	&0	&0	&0	&1	&1	&0	&0	&0	&0	&0	&0	&0	&0	&0	&0	&0	&0	&0	&0	&0	&0	&0	&0	&1	&0	&0	&1	&0	&0	&0	&0	&0	&0	&0	&0	&0	&0	&0\\
		0	&0	&0	&0	&0	&0	&0	&1	&0	&1	&0	&1	&0	&0	&0	&0	&0	&0	&0	&0	&0	&0	&0	&0	&0	&0	&0	&0	&0	&0	&0	&0	&0	&0	&0	&1	&1	&0	&0	&0\\
		0	&0	&0	&0	&0	&0	&0	&0	&1	&1	&0	&0	&0	&0	&0	&0	&0	&0	&0	&0	&0	&0	&0	&0	&0	&0	&0	&0	&0	&1	&0	&0	&0	&0	&1	&0	&0	&0	&0	&0\\
		0	&0	&0	&0	&0	&0	&0	&0	&0	&0	&1	&1	&0	&0	&1	&0	&0	&0	&0	&0	&1	&0	&1	&1	&1	&0	&0	&0	&0	&0	&0	&0	&0	&0	&0	&0	&0	&0	&0	&1\\
		0	&0	&0	&0	&0	&0	&0	&0	&0	&0	&0	&0	&1	&1	&0	&0	&1	&0	&0	&0	&0	&0	&0	&0	&0	&0	&0	&1	&0	&0	&0	&0	&0	&0	&0	&0	&0	&0	&0	&0\\
		0	&0	&0	&0	&0	&0	&0	&0	&0	&0	&0	&0	&0	&1	&1	&0	&0	&0	&0	&0	&0	&0	&0	&0	&0	&0	&0	&0	&1	&0	&1	&0	&1	&0	&0	&0	&0	&0	&0	&0\\
		0	&0	&0	&0	&0	&0	&0	&0	&0	&0	&0	&0	&0	&0	&0	&1	&1	&0	&0	&1	&0	&0	&0	&0	&0	&0	&0	&0	&0	&1	&0	&0	&0	&0	&0	&0	&0	&0	&1	&0\\
		0	&0	&0	&0	&0	&0	&0	&0	&0	&0	&0	&0	&0	&0	&0	&0	&0	&1	&1	&0	&0	&1	&0	&0	&0	&0	&0	&0	&0	&0	&0	&0	&0	&0	&0	&0	&0	&0	&0	&1\\
		0	&0	&0	&0	&0	&0	&0	&0	&0	&0	&0	&0	&0	&0	&0	&0	&0	&0	&1	&1	&0	&0	&0	&0	&0	&0	&0	&0	&0	&0	&0	&0	&0	&0	&0	&0	&1	&1	&0	&0\\
		0	&0	&0	&0	&0	&0	&0	&0	&0	&0	&0	&0	&0	&0	&0	&0	&0	&0	&0	&0	&1	&1	&0	&1	&0	&0	&0	&0	&0	&0	&0	&0	&1	&0	&0	&0	&0	&0	&0	&0\\
		0	&0	&0	&0	&0	&0	&0	&0	&0	&0	&0	&0	&0	&0	&0	&0	&0	&0	&0	&0	&0	&0	&0	&1	&1	&0	&0	&0	&0	&0	&0	&0	&0	&1	&0	&0	&0	&1	&0	&0\\
		0	&0	&0	&0	&0	&0	&0	&0	&0	&0	&0	&0	&0	&0	&0	&0	&0	&0	&0	&0	&0	&0	&1	&0	&1	&0	&1	&0	&0	&0	&0	&0	&0	&0	&0	&1	&0	&0	&0	&0\\
		0	&0	&0	&0	&0	&0	&0	&0	&0	&0	&0	&0	&0	&0	&0	&0	&0	&0	&0	&0	&0	&0	&0	&0	&0	&1	&1	&0	&0	&0	&0	&1	&0	&0	&1	&0	&0	&0	&0	&0\\
		0	&0	&0	&0	&0	&0	&0	&0	&0	&0	&0	&0	&0	&0	&0	&0	&0	&0	&0	&0	&0	&0	&0	&0	&0	&0	&0	&0	&0	&0	&1	&1	&0	&1	&0	&0	&0	&0	&1	&0
		\end{pmatrix} \label{hz_sem}
		\end{eqnarray}}
	
\normalsize

\section{Examples and codes associated with \lowercase{d}-th cover maps} \label{37 quantum code}
We know from \cite{du2006} that equivelar maps of type $ [p^q] $ exist for $ p = 3, q = 7 $ on a double torus with $ 12 $ vertices: namely, $ \mathcal{N}_i $, $ i= 1, 2, 3, 4, 5, 6$. {In particular, $ \mathcal{N}_1 $=[ [1, 2, 3], [1, 2, 4], [1, 3, 5], [1, 4, 6],  [1, 5, 7], [1, 6, 8], [1, 7, 8], [2, 3, 6], [2, 4, 7], [2, 6, 9], [2, 7, 10], [2, 9, 10], [3, 5, 9], [3, 6, 11], [3, 9, 12], [3, 11, 12], [4, 6, 9], [4, 7, 8], [4, 8, 12], [4, 9, 12], [5, 7, 11], [5, 9, 10], [5, 10, 12], [5, 11, 12], [6, 8, 11], [7, 10, 11], [8, 10, 11], [8, 10, 12] ].}

For every $ \mathcal{N}_i $, it is obvious to envision that each of these maps has a non-contractible cycle of length three, and therefore, for each of them, $ d_{min} = 3$. Now, $ H_X $ and $ H_Z $ corresponding to $ \mathcal{N}_1 $ are given by Eq. (\ref{hx for 37}) and Eq. (\ref{hz for 37}), respectively. Using graph rotation, mentioned in \cite{ringel1974}, we have $ H_XH_Z^T=0 $. Thus, we can obtain a code corresponding to this. Observe that in this case $ n=|f_1|=42 $, $ k=2-\chi(\mathcal{N}_1)=4 $, and therefore the $ [[42, 4, 3]] $ {quantum} code is obtained. Now, for the $ d $-th cover ($ d\geq1 $) of each map $ \mathcal{N}_i $, map $ \mathcal{N}_i^d $ is produced on the orientable surface with $ 12d $ vertices, $ 42d $ edges, and {\Echar{}} $ -2d $. Therefore, $ k=2-$ {${\chi(\mathcal{N}_i)}$} $=2 \times (1+d) $, and hence, a class of {quantum} codes with parameters {$[[n,k,d_{min}]]=$}$ [[42d,2(1+d),3]] $ associated with $ \mathcal{N}_i^d $ is produced.

In \cite{bmu2020}, it is shown that there are {at least 17 SEMs with Euler characteristic $ -1 $, denoted by $ \mathcal{K}_i $ ($ 1 \le i \le 17$).} It is easy to verify that every map $ \mathcal{K}_i $ satisfy the Euler characteristic equation.  

{For example, $ \mathcal{K}_{3} $=[ [1, 2, 10, 9, 8],  [3, 4, 19, 18, 16],  [5, 11, 13, 15, 14],  [6, 7, 20, 18, 12],  [1, 2, 3, 4],  [1, 4, 5, 6],  [1, 6, 7, 8],  [2, 3, 12, 11],  [2, 10, 13, 11],  [3, 12, 18, 16],  [4, 5, 14, 19],  [5, 6, 12, 11],  [7, 8, 14, 19],  [7, 19, 17, 20],  [8, 9, 15, 14],  [9, 10, 16, 17],  [9, 15, 20, 17],  [10, 13, 18, 16],  [13, 15, 20, 18] ]}

In $ \mathcal{K}_3 $, $ |f_0|=20 $, $ |f_1|=40 $ and $ |f_2|=19 $. Thus, the corresponding vertex-edge and face-edge incidence matrices $ H_X $ and $ H_Z $ are of size $ 20\times 40 $ and $ 19\times 40 $, respectively, as given in Eq. (\ref{hx_sem}) and Eq. (\ref{hz_sem}).

Using graph rotation as in the previous case, we conclude that $ H_XH_Z^T=0 $. Therefore, we obtain a $ [[n,k,d_{min}]] $ {quantum} code with $ n=|f_1|=40 $, $ k=2- \chi(\mathcal{K}_3) =3$, and $ d_{min}= 4$, and the length of the shortest non-contractible cycle in $ \mathcal{K}_3 $ and $ \mathcal{K}_3^* $ $, i.e., $ a  $ [[40,3,4]] $ {quantum} code is obtained. Similarly, codes related to maps $ \mathcal{K}_i $, $ i=1,2,\dots, 17 $ can {be} obtained and are listed in Table \ref{code table chi -1}.

As in the case of equivelar maps on a double torus, the $ d $-th $ (d\geq 1) $ covering of {SEMs} $ \mathcal{K}_i $, listed in Table \ref{code table chi -1}, produces a map $ \mathcal{K}_i^d $ of \Echar{}$ -d $, and therefore, for every $ \mathcal{K}_i^d $, $ k = 2 + d $. In $ \mathcal{K}_i^d $, $ i=1,2,3 $, $ (|f_0|,|f_1|)=(20d,40d) $. Similarly, {$(|f_0|,|f_1|)=(42d,84d)$ for $\mathcal{K}_4$; $(|f_0|,|f_1|)=(42d,63d)$ for $\mathcal{K}_5$; $(|f_0|,|f_1|)=(84d,126d)$ for $\mathcal{K}_6$; $(|f_0|,|f_1|)=(40d,60d)$ for $\mathcal{K}_7$;} $ (|f_0|,|f_1|)=(48d,72d) $ for $ \mathcal{K}_i^d $, {$ i=8,9 $}{;} $ (|f_0|,|f_1|)=(24d,48d) $ for $ \mathcal{K}_i^d $, {$ i=10,11 $}{;} $ (|f_0|,|f_1|)=(24d,36d) $ for $ \mathcal{K}_i^d $, {$ i=12,13 $}{;} $ (|f_0|,|f_1|)=(12d,30d) $ for $ \mathcal{K}_i^d $, {$ i=14 $}{;} and $ (|f_0|,|f_1|)=(12d,36d) $ for $ \mathcal{K}_i^d $, {$ i=15,16,17 $}. Therefore, the code parameter{s are} $ [[n,k,d_{min}]]= [[40d,2+d,4]] $ associated with the map $ \mathcal{K}_i^d $ for $ i=1,2,3 $, {$[[n,k,d_{min}]]=[[84d,2+d,4]]$ associated with the map $\mathcal{K}_4^d$, $[[n,k,d_{min}]]=[[63d,2+d,4]]$ associated with the map $\mathcal{K}_5^d$, $[[n,k,d_{min}]]=[[126d,2+d,4]]$ associated with the map $\mathcal{K}_6^d$, $[[n,k,d_{min}]]=[[60d,2+d,4]]$ associated with the map $\mathcal{K}_7^d$} $ [[n,k,d_{min}]]= [[72d,2+d,4]] $ associated with the map $ \mathcal{K}_i^d $ for {$ i=8,9 $}, $ [[n,k,d_{min}]]= [[48d,2+d,4]]$ associated with the map $ \mathcal{K}_i^d $ for {$ i=10,11 $}, $ [[n,k,d_{min}]]=[[36d,2+d,3]] $ associated with the map $ \mathcal{K}_i^d $ for {$ i=12,13 $}, $ [[n,k,d_{min}]]= [[30d,2+d,4]] $ associated with the map $ \mathcal{K}_i^d $ for $ i=14 $ and $ [[n,k,d_{min}]]= [[36d,2+d,3]]$ associated with the map $ \mathcal{K}_i^d $ for {$ i=15,16,17 $}. The above details are presented in compact form in Table \ref{code for equivelar and semi-equivelar d cover}.
\section{Table of Quantum Codes} \label{table of quantum codes}
\begin{table*}[h!]
	
	\centering
	\begin{tabular}{|p{2.5cm}|p{2.5cm}|p{1.5cm}|p{2cm}|p{1cm}|p{2.5cm}|}
		\hline\hline
		Map Type & Maps & $ n=|f_1| $ & $ k=2-\chi$ & $ d_{min} $ & $ [[n,k,d_{min}]] $\\
		\hline\hline
		$ [4^3,5^1] $ & $ \mathcal{K}_1, \mathcal{K}_2,\mathcal{K}_3 $ & $ 40 $ & $ 3 $ & $ 4 $ & $ [[40,3,4]] $\\
		{$[3^1,4^1,7^1,4^1]$} & {$\mathcal{K}_4$} & {$84$} & {$3$} & {$4$} & {$[[84,3,4]]$}\\
		{$[6^2,7^1]$} & {$\mathcal{K}_5$} & {$63$} & {$3$} & {$4$} & {$[[63,3,4]]$}\\
		{$ [4^1,6^1,14^1] $} & {$\mathcal{K}_6$} & {$126$} & {$3$} & {$4$} & {$[[126,3,4]]$}\\
		{$[4^1,8^1,10^1]$} & {$\mathcal{K}_7$} & {$60$} & {$3$} & {$4$} & {$[[60,3,4]]$}\\
		$ [4^1,6^1,16^1] $ & {$ \mathcal{K}_8, \mathcal{K}_9 $} & $ 72 $ & $ 3 $ & $ 4 $ & $ [[72,3,4]]  $\\
		$ [3^1,4^1,8^1,4^1] $ & {$ \mathcal{K}_{10}, \mathcal{K}_{11} $} & $ 48 $ & $ 3 $ & $ 4 $ & $ [[48,3,4]] $ \\
		$ [6^2,8^1] $ &  {$ \mathcal{K}_{12}, \mathcal{K}_{13} $} & $ 36 $ & $ 3 $ & $ 3 $ & $ [[36,3,3]] $ \\
		$ [3^1,4^1,3^1,4^2] $ & {$ \mathcal{K}_{14} $} & $ 30 $ & $ 3 $ & $ 4 $ & $ [[30,3,4]] $ \\
		$ [3^5,4^1] $ & {$ \mathcal{K}_{15}, \mathcal{K}_{16}, \mathcal{K}_{17} $} & $ 36 $ & $ 3 $ & $ 3 $ & $ [[36,3,3]] $\\
		\hline\hline
	\end{tabular}
	\caption{\small{Table of quantum codes associated with semi-equivelar maps on the surface of $\chi = -1 $}}
	\label{code table chi -1}
\end{table*}
\begin{table*}[h!]
	\centering
	\begin{tabular}{|p{0.5cm}|p{2cm}|p{3cm}|p{0.7cm}|p{7cm}|}
		\hline\hline
		Sl. No& $ n $ & $ k $ & $ d_{min} $ & $ [[n,k,d_{min}]] $ \\
		\hline\hline
		1&$ 2t^2 $ & $ 2 $ & $ t $ & $ [[2t^2,2,t]] $, $ t\geq 3 $ \\
		2&$ \frac{3}{2}t^2 $ & $ 2 $ & $ t $ & $ [[\frac{3}{2}t^2,2,t]] $, $ t=2m(m\geq 2) $ \\
		3&$ t^2 $ & $ 2 $ & $ t $ & $ [[t^2,2,t]] $, $ t=2m(m\geq 2) $\\
		4&$ t^2+1 $ & $ 2 $ & $ t $ & $ [[t^2+1,2,t]] $, $ t=2m+1(m\geq1) $ \\
		5&$ \binom{t}{2} $ & $ \binom{t}{2}-2(t-1) $ & $ 3 $ & $ [[\binom{t}{2},\binom{t}{2}-2(t-1),3]] $, $ (t\equiv 0\vee 1 \mod 4 )  $ \\
		6&$ \frac{t(t-3)}{2} $ & $ \frac{t(t-3)}{2}-2(t-1) $ & $ 3 $ & \scriptsize{$ [[\frac{t(t-3)}{2}, \frac{t(t-3)}{2}-2(t-1), 3]] $, $ (t\equiv 0 \mod 2, t\geq 8) $} \\
		7&$ t(t-5) $ & $ t(t-5)-2(t-1) $ & $ 3 $ & \scriptsize{$ [[t(t-5), t(t-5)-2(t-1), 3]] $, $ (t\equiv 0 \mod 2, t\geq 8) $}\\
		8&$ \frac{t(t-2)}{4} $ & $ \frac{t(t-2)}{4}-2(t-2) $ & $ 3 $ & \scriptsize{$ [[\frac{t(t-2)}{4},\frac{t(t-2)}{4}-2(t-2),3 ]] $, $ (t\equiv 0 \mod 2, t\geq 10) $} \\
		9&$ \frac{3t}{2} $ & $ \frac{t}{2}-10 $ & $ 3 $ & $ [[\frac{3t}{2},\frac{t}{2}-10, 3 ]] $, $ (t\equiv 0 \mod 4, t\geq 28) $ \\
		10&$ t^2 $ & $ t^2-4t+2 $ & $ 4 $ & $ [[t^2, t^2-4t+2,4]] $, $ (t=4,8,10,12,\dots) $ \\
		11&$ 2t(t-1) $ & $ 2(t^2-3t+1) $ & $ 3 $ & $ [[2t(t-1),2(t^2-3t+1)${,3}$]] $, $ t\geq 3 $ \\
		12&\scriptsize{$ (2m_1-1)(3^{m_1-1}+2m_2-1) $} & \scriptsize{$ 2+(2m_1-5)(3^{m_1-1}+2m_2-1) $} & $ 4 $ & \scriptsize{$  [[(2m_1-1)(3^{m_1-1}+2m_2-1),2+(2m_1-5)(3^{m_1-1}+2m_2-1), 4 ]] $, ($ m_1\geq3, m_2\geq0 $)}\\
		13&\scriptsize{$ m_1(3^{m_1}+2m_2-1) $} & \scriptsize{$ 2+(m_1-2)(3^{m_1}+2m_2-1) $} & $ 4 $ & \scriptsize{$[[m_1(3^{m_1}+2m_2-1), 2+(m_1-2)(3^{m_1}+2m_2-1), 4]] $, ($ m_1\geq3, m_2\geq0 $)}\\
		\hline\hline
	\end{tabular}
	\caption{\small{Quantum code $ [[n,k,d_{min}]] $}}
	\label{quantum_code}
\end{table*}
\begin{table*}[h!]
	\centering
	\begin{tabular}{|p{2.5cm}|p{1.5cm}|p{2cm}|p{1cm}|p{3cm}|p{2cm}|}
		\hline\hline
		Map type & $ n=|f_1| $ & $ k $ & $ d_{min} $ & $ [[n,k,d_{min}]] $ & $ \frac{k}{n} $ as $ n\rightarrow \infty $\\
		\hline\hline
		$ [3^7] $ & $ 42d $ & $ 2(1+d) $ & $ 3 $ & $ [[42d,2(1+d),3]] $ & $ \frac{1}{21} $ \\
		$ [4^3,5^1] $ & $ 40d $ & $ 2+d $ & $ 4 $ & $ [[40d,2+d,4]] $ & $ \frac{1}{40} $ \\
		{$[3^1,4^1,7^1,4^1]$} & {$84d$} & {$2+d$} & {$4$} & {$[[84d,2+d,4]]$} & {$\frac{1}{84}$}\\
		{$[6^2,7^1]$} & {$63d$} & {$2+d$} & {$4$} & {$[[63d,2+d,4]]$} & {$\frac{1}{63}$}\\
		{$[4^1,6^1,14^1]$} & {$126d$} & {$2+d$} & {$4$} & {$[[126d,2+d,4]]$} & {$\frac{1}{126}$}\\
		{$[4^1,8^1,10^1]$} & {$60d$} & {$2+d$} & {$4$} & {$[[60d,2+d,4]]$} & {$\frac{1}{60}$}\\
		$ [[4^1,6^1,16^1]] $ & $ 72d $ & $ 2+d $ & $ 4 $ & $ [[72d,2+d,4]] $ & $ \frac{1}{72} $ \\
		$ [3^1,4^1,8^1,4^1] $ & $ 48d $ & $ 2+d $ & $ 4 $ & $ [[48d,2+d,4]] $ & $ \frac{1}{48} $ \\
		$ [6^2,8] $ & $ 36d $ & $ 2+d $ & $ 3 $ & $ [[36d,2+d,3]] $ & $ \frac{1}{36} $ \\
		$ [3^1,4^1,3^1,4^2] $ & $ 30d $ & $ 2+d $ & $ 4 $ & $ [[30d,2+d,4]] $ & $ \frac{1}{30} $ \\
		$ [3^5,4^1] $ & $ 36d $ & $ 2+d $ & $ 3 $ & $ [[36d,2+d,3]] $ & $ \frac{1}{36} $\\
		\hline\hline
	\end{tabular}
	\caption{\small{Code table associated with {covering maps} of equivelar maps on double torus and semi-equivelar maps on the surface of \Echar{-1}}}
	\label{code for equivelar and semi-equivelar d cover}
\end{table*}

The classes of quantum codes presented in Table \ref{quantum_code} from $ 1-11 $ are from references \cite{avaz2018,leslie2014,silva2010,kitaev2003,bombin2006}, and the codes from $ 12-13 $ are presented in this paper. { The encoding rate of the codes from $ 5-13 $ is 1. Note that the minimum distances of the classes of HQC presented in this article are greater than the minimum distance of the class of HQCs available in the literature (except the class in Table 2, Sl No 10).} Codes in Table \ref{code for equivelar and semi-equivelar d cover} are obtained from the $ d $-th covering map of the equivelar and {SEMs} on the double torus and on the surface of \Echar{-1}. { In these code parameters,} {encoding rate} $ \frac{k}{n}\rightarrow \alpha $ as $ n\rightarrow \infty $ with $ \alpha< 1 $.
\section{Conclusion}\label{conclution}
{The central idea of TQC is to make the quantum states depend upon topological properties of a physical system, because topological properties are invariant under smooth degradations. Therefore, obtaining new classes of codes means adding new topological properties to the class of existing topological properties where information can be stored. In this paper, we have constructed thirteen new classes of homological quantum codes. These codes are associated with maps, namely, equivelar maps and semi-equivelar maps on some surfaces. In particular, we have introduced two classes of quantum codes associated with the maps of type $ [k^k] $, i.e., self-dual maps. In this case, the encoding rate is $ \frac{k}{n}\rightarrow 1 $ when $ n\rightarrow\infty $. Also, we have presented eleven new classes of quantum codes which are associated with the $d$-th covering of equivelar and semi-equivelar maps of a double torus and the surface with Euler characteristic $-1$. In both cases, the encoding rate $ \frac{k}{n} $ is $ \alpha $, where $ \alpha< 1 $. This work can be extended to other classes of maps on other surfaces such as Catalon maps in the same manner.}


\end{document}